\documentclass[11pt, a4paper]{article}

\usepackage{amsmath, amssymb, amsthm, xspace, a4wide, amscd}
\usepackage[english]{babel}
\usepackage[T1]{fontenc}
\usepackage[latin1]{inputenc}

\textwidth=17cm \textheight=25cm

\voffset=-2cm

\sloppy
% Place your definitions here

\newcommand{\ff}{\mathcal{F}}
\newcommand{\bb}{\mathcal{B}}
\newcommand{\rr}{\mathbb{R}}
\newcommand{\zz}{\mathbb{Z}}
\newcommand{\cc}{\mathbb{C}}
\newcommand{\pp}{{\sf{P}}}
\newcommand{\xx}{{\sf X}}
\newcommand{\ee}{{\sf E}}
\newcommand{\pr}{\partial}
\newcommand{\llf}{{\sf L}}
\newcommand{\ii}{\mathbf{1}}

\newcommand{\aaa}{\mathcal{A}}
\newcommand{\hh}{\mathcal{H}}

\newtheorem{thm}{Theorem}
\newtheorem{lemma}{Lemma}
\newtheorem{assx}{Assumption A\hspace*{-4pt}}

\theoremstyle{definition}
\newtheorem{defin}{Definition}

\theoremstyle{remark}% Remark

\begin{document}

\begin{center}\Large
The Burgers equation driven by a stochastic measure
\end{center}

\begin{center}
Vadym Radchenko\footnote{Department of Mathematical Analysis, Taras Shevchenko National University of Kyiv,
01601 Kyiv, {Ukraine},\\ vadymradchenko@knu.ua}
 \footnote{This work was supported by Alexander von Humboldt Foundation, grant 1074615. The author is grateful to Prof. M.~Z\"{a}hle for fruitful discussions during the preparation of this paper and thanks the Friedrich-Schiller-University of Jena for its hospitality.}
\end{center}

\begin{abstract}
We study the class of one-dimensional equations driven by a stochastic measure $\mu$. For $\mu$ we assume only $\sigma$-additivity in probability. This class of equations include the Burgers equation and the heat equation. The existence and uniqueness of the solution are proved, and the averaging principle for the equation is studied.
\end{abstract}

\section{Introduction}\label{scintr}

In this paper, we consider the stochastic equation, which can formally be written as
\begin{equation}\label{eq:burgg}
\frac{\pr u}{\pr t}=\frac{\pr^2 u}{\pr x^2}+f(t,x,u(t,x))+\frac{\pr g}{\pr x}(t,
x,u(t,x)) + \sigma(t, x)\frac{\pr  \mu}{\pr x},\quad u(0,x)=u_0(x)
\end{equation}
where $(t, x)\in [0,\ T]\times\rr$, and $\mu$ is a stochastic measure defined on $\bb(\rr)$ (Borel
$\sigma$-algebra in $\rr$). For $\mu$ we assume only $\sigma-$additivity in probability, assumptions for $f,\ g,\ \sigma$
and $u_0$ are given in Section~\ref{sc:prob}. We consider the solution to the formal equation~\eqref{eq:burgg} in the mild
form (see~\eqref{eq:bemfg} below).

If $f=0$ and $g(x,t,v)=v^2/2$ then \eqref{eq:burgg} is the Burgers equation, for $g={\rm const}$ we get the heat equation.

In the paper, we prove the existence and uniqueness of the solution and obtain its $\llf^2$-continuity. In addition, we obtain that the averaging principle holds for equation~\eqref{eq:burgg}.

Equation~\eqref{eq:burgg} driven by the Wiener process was studied in \cite{gyonua} for $x\in\rr$, in \cite{gyongy98} and \cite{gyonrov} for $x\in[0,1]$. Equation driven by L\'{e}vy process was considered in \cite{jacob10}. The main reason to study equation~\eqref{eq:burgg} is that it is a generalisation of the Burgers equation which is very important in fluid mechanics. The stochastic Burgers equation was studied, for example, in \cite{dong07}, \cite{lewnua18}, \cite{mazz20}, \cite[Chapter 18]{peszab}, \cite{yuan22}, \cite{zhou22}. All these equations are driven by Gaussian processes or L\'{e}vy processes. In our paper, we consider a more general integrator. Our proofs are based on methods and results of~\cite{gyonua}.

Stochastic equations driven by stochastic measures were studied, for example, in \cite{bodumj} (general parabolic equation), \cite{bodrad20} (wave equation), \cite{rads09} (heat equation). The averaging principle for such equations was considered in \cite{bodnarchuk_2020},~\cite{radavsm19}, \cite{manikin_2022av}. The detailed theory of stochastic measures is presented in~\cite{radbook}. Note that in all these publications, the functions in the equations are assumed to be bounded.

The rest of the paper is organized as follows. In Section~\ref{sc:prel} we have compiled some basic facts about stochastic
measures. The precise formulation of the problem and our assumptions are given in Section~\ref{sc:prob}, some regularity properties of the stochastic integral are studied in Section~\ref{sc:esti}, and one auxiliary equation is considered in Section~\ref{sc:burgaux}. The existence and uniqueness of the solution to equation~\eqref{eq:burgg} are proved in Section~\ref{sc:burgsol}. The averaging principle for our equation is obtained in Section~\ref{sc:aver}.

\section{Preliminaries}\label{sc:prel}

In this section, we give basic information concerning stochastic measures in a general setting. In equation~\eqref{eq:burgg}, $\mu$ is defined on Borel subsets of $\rr$.

Let $\llf_0=\llf_0(\Omega, {\ff}, {\pp} )$ be the set of all real-valued
random variables defined on the complete probability space $(\Omega, {\ff}, {\pp} )$. Convergence in $\llf_0$ means the convergence in probability. Let ${\xx}$ be an arbitrary set and ${\bb}$ a $\sigma$-algebra of subsets of ${\xx}$.

\begin{defin}\label{def:stme}
A $\sigma$-additive mapping $\mu:\ {\bb}\to \llf_0$ is called {\em stochastic measure} (SM).
\end{defin}

We do not assume the moment existence or martingale properties for SM. In other words, $\mu$ is $\llf_0$--valued measure.

Important examples of SMs are orthogonal stochastic measures, $\alpha$-stable random measures defined on a $\sigma$-algebra for $\alpha\in (0,1)\cup(1,2]$ (see \cite[Chapter 3]{samtaq}). Theorem~8.3.1 of~\cite{kwawoy} states the conditions under which the increments
of a real-valued process with independent increments generate an SM.

Many examples of the SMs on the Borel subsets of $[0, T]$ may be given by the Wiener-type integral
\begin{equation}\label{eq:muax}
\mu(A)=\int_{[0,T]} {\ii}_A(t)\,{d}X_t.
\end{equation}

We note the following cases of processes $X_t$ in~\eqref{eq:muax} that generate SM.

\begin{enumerate}

\item\label{itmart} $X_t$~-- any square integrable martingale.

\item\label{itfrbr} $X_t=W_t^H$~-- the fractional Brownian motion with Hurst index $H>1/2$, see Theorem~1.1~\cite{memiva}.

\item\label{itsfrb} $X_t=S_t^k$~-- the sub-fractional Brownian motion for $k=H-1/2,\ 1/2<H<1$, see Theorem 3.2~(ii) and Remark 3.3~c) in~\cite{tudor09}.

\item\label{itrose} $X_t=Z_H^k(t)$~-- the Hermite process, $1/2<H<1$, $k\ge 1$, see~\cite{tudor07}, \cite[Section 3.1.3]{tudor13}. $Z_H^2(t)$ is known as the Rosenblatt process, see~\cite[Section~3]{tudor08}.

\end{enumerate}

The following analogue of the Nikodym theorem is satisfied for SMs.

\begin{thm}(\cite[Theorem 8.6]{dretop})\label{thniko}
Let $\mu_n$ are SMs on ${\bb}$, $n\ge 1$, and
\[
\forall\, {A}\in{\bb}\ \exists\ \mu({A})={\rm p}\lim_{n\to\infty} \mu_n ({A}).
\]
Then $\mu$ is a SM on ${\bb}$.
\end{thm}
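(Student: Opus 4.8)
The statement to prove is the Nikodym-type theorem for stochastic measures: if $\mu_n$ are SMs on $\bb$ and the pointwise limits $\mu(A) = {\rm p}\lim_n \mu_n(A)$ exist for every $A \in \bb$, then $\mu$ is again an SM, i.e. a $\sigma$-additive mapping $\bb \to \llf_0$.

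The plan is to reduce the problem to the classical Nikodym convergence theorem for vector measures by choosing a good topological vector space in which to work. The space $\llf_0 = \llf_0(\Omega, \ff, \pp)$ of all random variables with the topology of convergence in probability is a complete metrizable topological vector space (an $F$-space), with translation-invariant metric $\rho(\xi,\eta) = \ee\bigl[|\xi-\eta| \wedge 1\bigr]$ (or the Ky Fan metric). The hypothesis says precisely that $\mu_n(A) \to \mu(A)$ in this metric for each fixed $A$. Each $\mu_n$ is, by Definition~\ref{def:stme}, a countably additive $\llf_0$-valued set function, hence a vector measure with values in this $F$-space. First I would verify that the finite additivity of $\mu$ is immediate: for disjoint $A, B$, $\mu(A \cup B) = {\rm p}\lim_n \mu_n(A \cup B) = {\rm p}\lim_n (\mu_n(A) + \mu_n(B)) = \mu(A) + \mu(B)$, using continuity of addition in $\llf_0$. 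So the entire content is upgrading finite additivity to countable additivity, equivalently showing $\mu(A_k) \to 0$ in probability whenever $A_k \downarrow \emptyset$ in $\bb$.

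The key step is to invoke the Nikodym convergence theorem in the form valid for vector measures with values in a sequentially complete locally convex space — or, more to the point here, the version established by the authors for $\llf_0$ (which is \emph{not} locally convex, so one must use the Orlicz–Pettis / Nikodym circle of results adapted to $F$-spaces, as in the cited Drewnowski–Florencio–Paúl line of work; indeed the citation is to \cite[Theorem 8.6]{dretop}, so I would simply quote that result). The engine behind it is a Vitali–Hahn–Saks / Nikodym boundedness argument: the family $\{\mu_n\}$ is \emph{uniformly countably additive} — this uses a sliding-hump (gliding hump) argument together with the Baire category theorem applied to $\Omega$ or to the algebra $\bb$, exploiting completeness of $\llf_0$. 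Once uniform countable additivity of the family $\{\mu_n\}_n$ is in hand, countable additivity of the pointwise limit $\mu$ follows routinely: given $A_k \downarrow \emptyset$ and $\varepsilon > 0$, pick $N$ with $\rho(\mu_n(A_k), 0) < \varepsilon$ for all $n$ and all $k \ge N$ uniformly (uniform $\sigma$-additivity), then let $n \to \infty$ to get $\rho(\mu(A_k), 0) \le \varepsilon$ for $k \ge N$.

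The main obstacle — and the reason this is a genuine theorem rather than a triviality — is exactly that $\llf_0$ is not locally convex, so the standard Nikodym theorem for Banach-space-valued or locally-convex-space-valued measures does not apply off the shelf. One needs the version for metrizable topological vector groups / $F$-spaces, where the sliding-hump construction must be carried out with the quasi-norm (or $F$-norm) $\|\cdot\| = \ee[|\cdot|\wedge 1]$ in place of a genuine norm, handling its failure of homogeneity carefully; the completeness and metrizability of $\llf_0$ are what make the Baire category argument go through. Since the paper cites \cite[Theorem 8.6]{dretop} for precisely this statement, the proof in the paper presumably consists of checking that $\llf_0$ with convergence in probability satisfies the hypotheses of that general theorem (it is a complete metrizable topological vector space) and then quoting it; I would organize my write-up the same way, with the category/sliding-hump argument deferred to that reference.
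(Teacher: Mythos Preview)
The paper gives no proof of this theorem whatsoever: it is stated with the attribution ``(\cite[Theorem 8.6]{dretop})'' and immediately used, with no argument, no verification of hypotheses, and no proof environment. Your proposal goes well beyond the paper by actually sketching the underlying Vitali--Hahn--Saks / sliding-hump mechanism and correctly identifying the subtlety (that $\llf_0$ is a non-locally-convex $F$-space, so one needs the Drewnowski-type extension of the Nikodym theorem rather than the Banach-space version). Your closing guess that ``the proof in the paper presumably consists of checking that $\llf_0$ \dots\ satisfies the hypotheses \dots\ and then quoting it'' is slightly too generous---the paper does not even do that much; it simply imports the result wholesale.
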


Applying Theorem~\ref{thniko}, we can construct the following example of the SM on all Borel subsets of $\rr$:
\begin{equation}\label{eq:muaxw}
\mu(A)=\int_{\rr} {\ii}_A(t)\xi(t)\,{d}W_t:=\lim_{T\to \infty} \int_{[0,T]} {\ii}_A(t)\xi(t)\,{d}W_t,
\end{equation}
where $W_t$ is the Wiener process, $\xi(t)$ is an adapted process such that $\int_{\rr}\ee \xi(t)^2\,dt<\infty$, the limit is taken in~$\llf^2(\Omega)$.

For deterministic measurable functions $g:\xx\to\rr$, an integral of the form $\int_{\xx}g\,d\mu$ is studied
in~\cite[Chapter 1]{radbook} (see also~\cite[Chapter 7]{kwawoy}). In particular, every bounded measurable $g$ is integrable w.~r.~t. any SM~$\mu$.

To estimate the stochastic integral, we will use the following statement.

\begin{lemma} (Lemma 3.1~\cite{rads09}, Lemma 2.3~\cite{radbook})\label{lm:fkmu}
Let $f_l:\ {\xx}\to \rr,\ l\ge 1$, be measurable functions such that
$ \bar{f}(x)=\sum_{l=1}^{\infty} |{f_l}(x)|$ is integrable w.r.t.~$\mu$. Then
\begin{equation*}
\sum_{l=1}^{\infty}\Bigl(\int_{\xx} f_l\,d\mu \Bigr)^2<\infty\quad \mbox{\textrm ~a.~s.}
\end{equation*}
\end{lemma}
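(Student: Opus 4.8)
The statement to prove is Lemma~\ref{lm:fkmu}: if $f_l:\xx\to\rr$, $l\ge1$, are measurable with $\bar f(x)=\sum_{l\ge1}|f_l(x)|$ integrable with respect to $\mu$, then $\sum_{l\ge1}\bigl(\int_\xx f_l\,d\mu\bigr)^2<\infty$ a.s. The key point is that we have no moment control on $\mu$, so the only tools available are $\sigma$-additivity in probability and the construction of the integral $\int_\xx g\,d\mu$ for $g$ integrable w.r.t. $\mu$ (from \cite[Chapter~1]{radbook}). My plan is to reduce the a.s. finiteness of the sum of squares to an equicontinuity/convergence property of the set function $A\mapsto\int_A\bar f\,d\mu$, exploiting that each $f_l$ is dominated by $\bar f$.

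**Main steps.** First I would introduce the signed-measure-like object $\nu(A)=\int_\xx \ii_A\,\bar f\,d\mu$; by the theory of the integral w.r.t. an SM, $\bar f$ integrable means $\nu$ is a well-defined $\llf_0$-valued $\sigma$-additive set function on $\bb$ (an SM in its own right). Next, the crucial observation: since $|f_l|\le\bar f$, each $f_l$ is integrable w.r.t. $\mu$ and in fact $\int_\xx f_l\,d\mu = \int_\xx h_l\,d\nu$ where $h_l=f_l/\bar f$ on $\{\bar f>0\}$ and $0$ otherwise, so that $|h_l|\le 1$ everywhere. Thus the problem becomes: for a uniformly bounded sequence $h_l$ with $|h_l|\le 1$, show $\sum_l(\int_\xx h_l\,d\nu)^2<\infty$ a.s. Then I would argue by contradiction using the Banach–Steinhaus / Nikodym-type machinery available for SMs: if the sum diverges on a set of positive probability, one can extract (via a pathwise selection of simple functions of the form $\sum \pm\,h_l$ over finite blocks, or via a gliding-hump argument on disjoint-support pieces) a sequence of measurable functions $g_m$ with $|g_m|\le 1$ whose integrals $\int_\xx g_m\,d\nu$ do not tend to zero, contradicting a continuity property of the integral. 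The cleanest route is probably the one used in \cite{rads09},\cite{radbook}: reduce first to $f_l$ with pairwise disjoint supports (replacing $f_l$ by its restriction to a suitable partition refining the supports adds only finitely many extra terms per level and does not affect finiteness), in which case $\int_\xx h_l\,d\nu = \nu(A_l')$-type quantities become genuine increments of $\nu$ over disjoint sets, and $\sigma$-additivity in probability of $\nu$ forces $\nu(A_l')\to0$ a.s. along a subsequence — but to get the full sum of squares finite one needs the stronger statement that $\sum \nu(A_l')^2<\infty$, which follows because an $\llf_0$-valued measure restricted to a disjoint sequence, after passing to a.s.-convergent subsequences, is summable in $\ell^2$ pathwise.

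**The main obstacle.** The hard part is precisely bridging from "in probability" to the pathwise $\ell^2$ conclusion without any moments. One cannot simply take expectations. The right mechanism is: $\sigma$-additivity in probability of an SM $\nu$ implies, by a theorem of Nikodym type (cf. Theorem~\ref{thniko} and the surrounding circle of ideas, ultimately the Vitali–Hahn–Saks / Nikodym boundedness phenomenon in $\llf_0$ with the metric of convergence in probability), that $\{\nu(A):A\in\bb\}$ is bounded in probability and that $\nu$ has a form of uniform $\sigma$-additivity; from this one deduces that for disjoint $A_l$, the tail $\sum_{l\ge N}\ii_{A_l}$ has $\nu$-integral tending to $0$ in probability, and a Borel–Cantelli / fast-subsequence argument upgrades this to a.s. convergence of $\sum_l\nu(A_l)^2$. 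I would therefore spend most of the write-up carefully invoking the representation of $\int f_l\,d\mu$ via a dominating SM and the disjointification, then cite or reprove the pathwise $\ell^2$-summability of increments of an $\llf_0$-valued measure along a disjoint sequence; the rest (handling $\{\bar f=0\}$, the finitely-many overlap terms, and general — not disjoint — $f_l$) is routine bookkeeping.
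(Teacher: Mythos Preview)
The paper does not prove Lemma~\ref{lm:fkmu}; it is simply quoted from \cite{rads09} and \cite{radbook}, so there is no in-paper proof to compare against. I can therefore only comment on the internal soundness of your sketch.

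Your reduction via $\nu(A)=\int_A \bar f\,d\mu$ to functions $h_l=f_l/\bar f$ with $\sum_l|h_l|\le 1$ is fine, and you are right that Nikodym-type $\llf_0$--boundedness is the engine. But two of the steps you lean on do not hold as stated. First, the ``reduce to pairwise disjoint supports, with only finitely many extra terms per level'' move is not available in general: take $f_l=2^{-l}\ii_{[0,1]}$, where all supports coincide and no finite refinement disjointifies the family. Second, ``Borel--Cantelli along a fast subsequence'' only yields $\nu(A_l)\to 0$ a.s.\ along that subsequence; it does not give $\sum_l\nu(A_l)^2<\infty$ a.s., which is a strictly stronger statement. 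The argument that actually closes the gap (and is the one in the cited references) is a Rademacher randomisation: from the $\llf_0$--boundedness of $\{\int g\,d\mu:|g|\le\bar f\}$ one gets that the partial sums $\sum_{l\le N}\varepsilon_l\int f_l\,d\mu$ are bounded in probability uniformly in $N$ and in the signs $\varepsilon_l\in\{\pm1\}$; taking $\varepsilon_l$ i.i.d.\ Rademacher on an auxiliary space and applying Paley--Zygmund (or Kahane's inequalities) to the conditional Rademacher sum then forces $\sum_l\bigl(\int f_l\,d\mu\bigr)^2<\infty$ a.s. No disjointification and no subsequence extraction are needed. Your write-up would be correct if you replaced the two problematic steps by this randomisation argument; the rest (the passage to $\nu$, handling $\{\bar f=0\}$) is, as you say, bookkeeping.
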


We consider the Besov spaces $B^\alpha_{22}([c, d])$. Recall that the norm in this classical space for
$0<\alpha< 1$ may be introduced by
\begin{equation}
\label{eq:bssn}
\|g\|_{B^\alpha_{22}([c, d])}=\|g\|_{\llf_{2}([c, d])}+\Bigl(\int_0^{d-c}\ {(w_2(g, r))^2}{r^{-2\alpha-1}}
\,dr\Bigr)^{1/2},
\end{equation}
where
\[
w_2(g, r)=\sup_{0\le h\le r}\Bigl(\int_{c}^{d-h} |g(y+h)-g(y)|^2\,dy\Bigr)^{1/2}.
\]

By $C$ and $C(\omega)$ we will denote positive constants and positive random constants respectively whose values may change.

\section{The problem}
\label{sc:prob}

We consider~\eqref{eq:burgg} in the mild form
\begin{equation}\label{eq:bemfg}
\begin{split}
u(t,x)=\int_{\rr} p(t,x-y)u_0(y)\,dy
+\int_0^t \int_{\rr}p(t-s,x-y) f(s,y,u(s,y))\,dy\,ds\\
-\int_0^t \int_{\rr}\frac{\pr p}{\pr y}(t-s,x-y) g(s,y,u(s,y))\,dy\,ds
+\int_{\rr} \int_0^t p(t-s,x-y) \sigma(s,y)\,ds\,d\mu(y),
\end{split}
\end{equation}
where equality holds a.~s. for each $t\in [0,T]$ for almost all $x\in\rr$, $u\in\cc([0,T],\llf^2(\rr))$. Here
\[
{p}(t, x) = \frac{1}{2\sqrt{\pi t}}\, e^{-\frac{x^2}{4t}}
\]
is the heat kernel, and $\mu$ is an SM on ~$\bb(\rr)$.

We will refer to the following assumptions on elements of~\eqref{eq:bemfg}.

\begin{assx}\label{assxu}$u_0(y)=u_0(y, \omega):{\rr}\times\Omega\to\rr$ is measurable and $u_0(y)=u_0(y, \omega)\in \llf^2(\rr)$ for each fixed $\omega$.
\end{assx}

\begin{assx}\label{assxfall}  $f(s, y, r):[0,T]\times{\rr}^2\to\rr$ is a Borel function satisfying the linear growth and Lipschitz condition:
\begin{eqnarray*}
|f(s, y, r)|\le a_1(y)+K|r|,\\
|f(s, y, r_1)-f(s, y, r_2)|\le (a_2(y)+L|r_1|+L|r_2|)|r_1-r_2|
\end{eqnarray*}
for all $s\in[0,T]$, $y,r_i\in\rr$ and for some constants $K,L$ and non-negative functions $a_i\in\llf^2(\rr)$.
\end{assx}

\begin{assx}\label{assxgall}  The function $g$ is of the form
\[
g(s,y,r)=g_1(s,y,r)+g_2(s,r),
\]
where
\[
g_1(s,y,r):[0,T]\times{\rr}^2\to\rr,\quad g_2(s,r):[0,T]\times{\rr}\to\rr
\]
are Borel functions satisfying the following linear and quadratic growth conditions:
\begin{eqnarray*}
|g_1(s, y, r)|\le b_1(y)+b_2(y)|r|,\\
|g_2(s, r)|\le K|r|^2,
\end{eqnarray*}
for all $s\in[0,T]$, $y,r\in\rr$, where $K$ is a constant, and
\[
b_1\in\llf^1(\rr)\cap\llf^2(\rr),\quad b_2\in\llf^2(\rr)\cap\llf^\infty(\rr)
\]
are nonnegative functions. Moreover, $g$ satisfies the following Lipschitz condition:
\begin{equation*}
|g(s, y, r_1)-g(s, y, r_2)|\le (b_3(y)+L|r_1|+L|r_2|)|r_1-r_2|
\end{equation*}
for all $s\in[0,T]$, $y,r_i\in\rr$ and for some constant $L$ and non-negative function $b_3\in\llf^2(\rr)$.
\end{assx}

\begin{assx}\label{assxs}
$\sigma(s,y):[0, T]\times{\rr}\to\rr$ is measurable, and
\[
|\sigma(s,y)|\le C_{\sigma},\quad |\sigma(s,y_1)-\sigma(s,y_2)|\le L_{\sigma}|y_1-y_2|^{\beta(\sigma)}
\]
for $1/2<\beta(\sigma)<1$ and some constants $C_{\sigma},\ L_{\sigma}$.
\end{assx}

\begin{assx}\label{assxm} $y^{\tau}$ is integrable w.~r.~t. $\mu$ on~$\rr$ for some $\tau>1/2$.
\end{assx}

For example, Assumption A\ref{assxm} holds for SM defined in \eqref{eq:muaxw} and given $\tau$ if
\[
\int_{\rr}t^{2\tau}\ee \xi(t)^2\,dt<\infty.
\]

If A\ref{assxs} and A\ref{assxm} hold, then, by Theorem of~\cite{bodumj} (or Theorem~3.1~\cite{radbook}), the random function
\[
\vartheta(t,x)=\int_{\rr} \int_0^t p(t-s,x-y) \sigma(s,y)\,ds\,d\mu(y)
\]
has a version $\widetilde{\vartheta}(t,x)$ such that for any fixed
\[
\delta\in (0,T),\quad M>0,\quad 0<\gamma_1<1/2,\quad 0<\gamma_2<1/4
\]
there exists $L_{\widetilde{\vartheta}}(\omega)$ such that
\begin{equation}\label{eq:vervh}
\begin{split}
|\widetilde{\vartheta}(t_1,x_1)-\widetilde{\vartheta}(t_2,x_2)|
\le L_{\widetilde{\vartheta}}(\omega)(|t_1-t_2|^{\gamma_2}+|x_1-x_2|^{\gamma_1}),\\
t_i\in[\delta,T],\quad |x_i|\le M,\quad i=1,\ 2.
\end{split}
\end{equation}

In the sequel, we will use  that
\begin{equation}\label{eq:estpx}
\Bigl|\frac{\pr p}{\pr y} (t-s,x-y)\Bigr|\le  \frac{C_\lambda}{t-s}
 e^{-\frac{\lambda(x-y)^2}{t-s}}
\end{equation}
for any $\lambda\in (0,1/4)$ and some constant $C_\lambda$.

\section{Regularity property of the stochastic integral}
\label{sc:esti}

In this section, we obtain some properties of $\widetilde{\vartheta}$.

For any $j\in \rr$ and all $n\ge 0$, put
\[
d_{kn}^{(j)}=j+k 2^{-n},\quad 0\le k\le 2^n,\quad \Delta_{kn}^{(j)}=(d_{(k-1)n}^{(j)}, d_{kn}^{(j)}],\quad
1\le k\le 2^n\, .
\]

A following estimate is a key tool for the estimates of the stochastic integral.

\begin{lemma}\label{lm:essih}(Lemma~3.2~\cite{rads09})
Let $\mu$ be defined on $\bb(\rr)$, $Z$ be an arbitrary set, and $q(z, x): Z\times [j,
j+1]\to\rr$ be a function such that for some $1/2<\alpha<1$ and for each $z\in Z$ $q(z, \cdot)\in
B^\alpha_{22}\left([j, j+1]\right)$. Then the random function
\[
\eta(z)=\int_{[j, j+1]}q(z,  x)\,d\mu(x),\quad z\in Z,
\]
has a version $\tilde{\eta}(z)$ such that for some constant $C$ (independent of $z,\ j,\ \omega$) and each
$\omega\in\Omega$,
\begin{equation}
\label{eq:esqm}
\begin{split}
|\tilde{\eta}(z)|\le |q(z,  j)\mu([j, j+1])|
+ C \|q(z)\|_{B^\alpha_{22}([j, j+1])} \Bigl\{\sum_{n\ge 1}2^{n(1-2\alpha)}\sum_{1\le
k\le 2^{n}}|\mu(\Delta_{kn}^{(j)})|^2 \Bigr\}^{1/2}.
\end{split}
\end{equation}
\end{lemma}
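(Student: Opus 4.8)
The plan is to expand $q(z,\cdot)$ on the interval $[j,j+1]$ in a Schauder-type (Faber) basis adapted to the dyadic partition points $d_{kn}^{(j)}$, integrate term by term against $\mu$, and control the resulting random series using the Besov norm of $q$ together with Lemma~\ref{lm:fkmu}. Concretely, for fixed $z$ write
\[
q(z,x)=q(z,j)+\bigl(q(z,j+1)-q(z,j)\bigr)\,e_0(x)+\sum_{n\ge1}\sum_{1\le k\le 2^n} c_{kn}(z)\,\varphi_{kn}(x),
\]
where $e_0$ is the linear ramp on $[j,j+1]$, the $\varphi_{kn}$ are the usual triangular hat functions supported on $\Delta_{(2k-1)(n+1)}^{(j)}\cup\Delta_{(2k)(n+1)}^{(j)}$ (so at scale $n$), normalized to have height $1$, and $c_{kn}(z)$ are the second-difference coefficients of $q(z,\cdot)$ at the corresponding triple of dyadic points. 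The key classical fact I would invoke is the equivalence of the Besov norm $\|q(z)\|_{B^\alpha_{22}([j,j+1])}$ with the sequence norm $\bigl(\sum_{n\ge1}2^{2n\alpha}2^{-n}\sum_k |c_{kn}(z)|^2\bigr)^{1/2}$ (up to the $\llf^2$ term and the boundary values); this is standard for $0<\alpha<1$ and is exactly why the Besov scale enters the statement.

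Next I would integrate the expansion against $\mu$. The ramp and constant terms contribute $q(z,j)\mu([j,j+1])$ plus a term bounded by $\|q(z)\|_{B^\alpha_{22}}|\mu([j,j+1])|$, which folds into the stated right-hand side. For the main double sum, $\int_{[j,j+1]}\varphi_{kn}\,d\mu$ is, up to a constant, a combination of the three numbers $\mu(\Delta_{\cdot}^{(j)})$ at scale $n+1$, hence $\bigl|\int \varphi_{kn}\,d\mu\bigr|^2\le C\bigl(|\mu(\Delta_{(2k-1)(n+1)}^{(j)})|^2+|\mu(\Delta_{(2k)(n+1)}^{(j)})|^2\bigr)$. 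Applying Cauchy--Schwarz in $k$ at each scale $n$ and then summing over $n$,
\[
\Bigl|\sum_{n,k}c_{kn}(z)\!\int\!\varphi_{kn}\,d\mu\Bigr|
\le \Bigl(\sum_{n}2^{2n\alpha}2^{-n}\!\sum_k|c_{kn}(z)|^2\Bigr)^{1/2}
\Bigl(\sum_{n}2^{-2n\alpha}2^{n}\!\sum_k\bigl|\!\int\!\varphi_{kn}\,d\mu\bigr|^2\Bigr)^{1/2},
\]
and the first factor is $\le C\|q(z)\|_{B^\alpha_{22}([j,j+1])}$ while the second, after the bound on $\int\varphi_{kn}\,d\mu$ and re-indexing $n+1\mapsto n$, is $\le C\bigl\{\sum_{n\ge1}2^{n(1-2\alpha)}\sum_{1\le k\le 2^n}|\mu(\Delta_{kn}^{(j)})|^2\bigr\}^{1/2}$, which is precisely the random factor in~\eqref{eq:esqm}. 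To legitimize the termwise integration I would apply Lemma~\ref{lm:fkmu}: the majorant $\sum_{n,k}|c_{kn}(z)|\,|\varphi_{kn}(x)|$ is dominated by a bounded (indeed continuous) function on $[j,j+1]$ since $q(z,\cdot)\in B^\alpha_{22}\subset C([j,j+1])$ for $\alpha>1/2$, hence is $\mu$-integrable, the partial sums converge uniformly in $x$ to $q(z,\cdot)$, and $\sum_{n,k}\bigl(\int\varphi_{kn}\,d\mu\bigr)^2<\infty$ a.s., so the series of integrals converges a.s.\ and equals $\int q(z,\cdot)\,d\mu$; redefining $\eta(z)$ on a null set gives the version $\tilde\eta(z)$ with the pointwise-in-$\omega$ bound.

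The main obstacle, and the point requiring care, is the uniformity of the exceptional null set and of the constant $C$ in $z$, $j$, and $\omega$: the bound~\eqref{eq:esqm} must hold for \emph{every} $\omega$ once $\tilde\eta$ is chosen. This is handled by noting that the only randomness on the right-hand side is the explicit series $S_j(\omega)=\sum_{n\ge1}2^{n(1-2\alpha)}\sum_k|\mu(\Delta_{kn}^{(j)})|^2$, which (by Lemma~\ref{lm:fkmu} applied to the indicator functions $\ii_{\Delta_{kn}^{(j)}}$ with the scale weights, or directly since $x\mapsto 1$ is $\mu$-integrable) is finite a.s.; on the event $\{S_j<\infty\}$ the deterministic estimates above are valid for all $z$ simultaneously with one universal $C$, and off this event one sets $\tilde\eta(z)\equiv0$. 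The translation-invariance of the construction under $j$ and the fact that all constants come only from the Besov-Schauder equivalence on the unit interval and from fixed combinatorial identities for the hat functions give the claimed independence of $j$. I would also double-check the endpoint contribution $q(z,j)\mu([j,j+1])$ is kept separate exactly because $q(z,j)$ need not be controlled by $\|q(z)\|_{B^\alpha_{22}}$ alone without the $\llf^2$-term, which is why it appears as its own summand in~\eqref{eq:esqm}.
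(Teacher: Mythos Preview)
The paper does not prove this lemma; it is quoted verbatim from \cite{rads09} (Lemma~3.2 there) without argument, so there is no in-paper proof to compare against. That said, your sketch contains a genuine gap.

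Your key step is the claim that ``$\int_{[j,j+1]}\varphi_{kn}\,d\mu$ is, up to a constant, a combination of the three numbers $\mu(\Delta_{\cdot}^{(j)})$ at scale $n+1$'', leading to the bound $\bigl|\int\varphi_{kn}\,d\mu\bigr|^2\le C\bigl(|\mu(\Delta_{(2k-1)(n+1)}^{(j)})|^2+|\mu(\Delta_{(2k)(n+1)}^{(j)})|^2\bigr)$. This is false for a general stochastic measure: the hat function $\varphi_{kn}$ is piecewise \emph{linear}, not piecewise constant, so its $\mu$-integral is not determined by the values of $\mu$ on the two half-intervals. A deterministic counterexample already shows the estimate fails: for the unit hat $\varphi$ on $[0,1]$ peaked at $1/2$ and $\mu=\delta_{1/4}-\delta_{\varepsilon}$ with small $\varepsilon>0$, one has $\mu((0,1/2])=\mu((1/2,1])=0$ while $\int\varphi\,d\mu=\varphi(1/4)-\varphi(\varepsilon)=\tfrac12-2\varepsilon$. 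Since Definition~\ref{def:stme} assumes only $\sigma$-additivity in $\llf_0$, there is no total-variation control available to rescue the inequality.

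The argument in \cite{rads09} avoids this by using piecewise \emph{constant} dyadic approximations
\[
q_n(z,x)=\sum_{k=1}^{2^n} q(z,d_{(k-1)n}^{(j)})\,\ii_{\Delta_{kn}^{(j)}}(x)
\]
rather than the Faber--Schauder expansion. Then $\int q_n\,d\mu$ is literally a finite linear combination of the numbers $\mu(\Delta_{kn}^{(j)})$, and the telescoping increments $q_{n+1}-q_n$ are again step functions, so that $\int(q_{n+1}-q_n)\,d\mu$ is a sum of first differences of $q(z,\cdot)$ at step $2^{-(n+1)}$ multiplied by values $\mu(\Delta_{k(n+1)}^{(j)})$. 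Cauchy--Schwarz in $k$ at each level and the standard discrete characterization of the $B^\alpha_{22}$ norm via dyadic increments then yield~\eqref{eq:esqm}. The remaining parts of your outline --- uniform convergence $q_n\to q$ from the embedding $B^\alpha_{22}\hookrightarrow C$ for $\alpha>1/2$, the version/null-set argument using a.s.\ finiteness of the random series (Lemma~\ref{lm:fkmu}), and the uniformity of $C$ in $z,j,\omega$ --- are correct and transfer unchanged to the step-function proof.
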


Further, we study the properties of stochastic integral $\vartheta$.

\begin{lemma}\label{lm:thcont} Let Assumptions A\ref{assxs} and A\ref{assxm} hold. Then for version $\widetilde{\vartheta}$ that satisfies \eqref{eq:vervh}, we have
$\widetilde{\vartheta}\in \cc([0,T],\llf^2(\rr))$
\end{lemma}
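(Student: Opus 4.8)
The plan is to show two things: first, that for each fixed $t\in[0,T]$ the random function $x\mapsto\widetilde{\vartheta}(t,x)$ belongs to $\llf^2(\rr)$ a.s. (indeed with $\ee\|\widetilde\vartheta(t,\cdot)\|_{\llf^2(\rr)}^2<\infty$), and second, that $t\mapsto\widetilde{\vartheta}(t,\cdot)$ is continuous as a map into $\llf^2(\rr)$. For the first part I would fix $t$ and split $\rr=\bigcup_{j\in\zz}[j,j+1]$, applying Lemma~\ref{lm:essih} on each unit interval to $q(z,y)=q((t,x),y)=\ii_{[j,j+1]}(\cdot)$-truncation of $\int_0^t p(t-s,x-y)\sigma(s,y)\,ds$ as a function of $y$. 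Using A\ref{assxs} (boundedness and H\"older continuity of $\sigma$) together with the smoothing properties of the heat kernel, one checks that $y\mapsto\int_0^t p(t-s,x-y)\sigma(s,y)\,ds$ lies in $B^\alpha_{22}$ of each unit interval with a norm that decays in $|x-j|$ like a Gaussian tail (uniformly in the relevant parameters), since $p(t-s,x-y)$ is tiny when $|x-y|$ is large. Summing the square of the bound \eqref{eq:esqm} over $x$ (integrating) and over $j$, and using A\ref{assxm} to control $\ee\,\mu([j,j+1])^2$ and the double series $\sum_n 2^{n(1-2\alpha)}\sum_k\ee\,\mu(\Delta_{kn}^{(j)})^2$ with summable-in-$j$ weights coming from the Gaussian decay, gives finiteness of the second moment and hence $\widetilde\vartheta(t,\cdot)\in\llf^2(\rr)$ a.s.

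For the continuity in $t$, I would use the pointwise H\"older estimate \eqref{eq:vervh} on compact $x$-sets together with a uniform $\llf^2$-tail bound to upgrade to $\llf^2(\rr)$-continuity. Concretely, fix $t_0\in(0,T)$ and $t\to t_0$; write
\[
\|\widetilde\vartheta(t,\cdot)-\widetilde\vartheta(t_0,\cdot)\|_{\llf^2(\rr)}^2
=\int_{|x|\le M}|\widetilde\vartheta(t,x)-\widetilde\vartheta(t_0,x)|^2\,dx
+\int_{|x|>M}|\widetilde\vartheta(t,x)-\widetilde\vartheta(t_0,x)|^2\,dx .
\]
On $\{|x|\le M\}$ and for $t,t_0\in[\delta,T]$, estimate \eqref{eq:vervh} gives the integrand $\le L_{\widetilde\vartheta}(\omega)^2|t-t_0|^{2\gamma_2}$, so this piece is at most $2M L_{\widetilde\vartheta}(\omega)^2|t-t_0|^{2\gamma_2}\to0$. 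For the tail $\{|x|>M\}$, the Gaussian-decay bound from the first part shows that $\sup_{t\in[\delta,T]}\int_{|x|>M}|\widetilde\vartheta(t,x)|^2\,dx$ can be made arbitrarily small (a.s., and also in expectation) by choosing $M$ large, because the $B^\alpha_{22}$-norms of $y\mapsto\int_0^t p(t-s,x-y)\sigma(s,y)\,ds$ on $[j,j+1]$ decay fast in $|j|$ uniformly for $t$ in a compact subinterval. Combining, first choose $M$ to kill the tail, then let $t\to t_0$ to kill the compact part; this handles $t_0\in(0,T)$, and the endpoints $t_0=0$ and $t_0=T$ are treated the same way (at $t_0=0$ one uses $\widetilde\vartheta(0,\cdot)=0$ and that $\int_{\rr}|\widetilde\vartheta(t,x)|^2\,dx\to0$ as $t\to0$, which again follows from the heat-kernel estimates since $\int_0^t p(t-s,x-y)\,ds\to0$).

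The main obstacle is establishing the quantitative bound on $\|q(t,x,\cdot)\|_{B^\alpha_{22}([j,j+1])}$ with the right decay in $|x-j|$: one must control both the $\llf^2$-part and the modulus-of-continuity part $\bigl(\int_0^{1}(w_2(q,r))^2 r^{-2\alpha-1}\,dr\bigr)^{1/2}$ of the function $y\mapsto\int_0^t p(t-s,x-y)\sigma(s,y)\,ds$ on $[j,j+1]$. This requires splitting the time integral near $s=t$ (where $p(t-s,\cdot)$ is singular but has small spatial spread) from the bulk, using A\ref{assxs} to move increments of $\sigma$ past the kernel, and exploiting standard heat-kernel bounds such as $\int_0^t\|\pr_y p(t-s,\cdot)\|_{\llf^1}\,ds\le C$ and the Gaussian estimate $p(t-s,x-y)\le C(t-s)^{-1/2}e^{-(x-y)^2/(4(t-s))}$ to produce a factor decaying in $\mathrm{dist}(x,[j,j+1])$. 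Once that estimate is in hand, everything else is summation of series and an $\varepsilon$-argument. Note that this regularity of $q$ in $y$ is essentially the same computation already needed to invoke the cited result of~\cite{bodumj} that yields \eqref{eq:vervh}, so it should be available by the methods of~\cite{rads09,radbook}.
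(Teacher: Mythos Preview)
Your overall architecture matches the paper's: decompose $\rr$ into unit intervals, apply Lemma~\ref{lm:essih} on each, estimate the Besov norm of $q(t,x,\cdot)=\int_0^t p(t-s,x-\cdot)\sigma(s,\cdot)\,ds$ with Gaussian decay in the distance from $x$ to $[j,j+1]$, and sum. The continuity step you propose (split $\rr$ into $\{|x|\le M\}$ and its complement, use \eqref{eq:vervh} on the compact part and the tail bound on the rest) is just a hands-on version of the dominated convergence argument the paper runs once it has a uniform-in-$t$ dominating function $g\in\llf^2(\rr)$.

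There is, however, a genuine gap. You write that you will use A\ref{assxm} ``to control $\ee\,\mu([j,j+1])^2$ and the double series $\sum_n 2^{n(1-2\alpha)}\sum_k\ee\,\mu(\Delta_{kn}^{(j)})^2$''. Assumption~A\ref{assxm} says nothing of the kind: it only asserts that the deterministic function $y\mapsto y^\tau$ is integrable with respect to $\mu$ for some $\tau>1/2$. Stochastic measures in the sense of Definition~\ref{def:stme} need not possess any moments at all (e.g.\ $\alpha$-stable random measures with $\alpha<2$), so $\ee\,\mu(A)^2$ may be infinite for every nondegenerate $A$. Consequently your claim $\ee\|\widetilde\vartheta(t,\cdot)\|_{\llf^2(\rr)}^2<\infty$ is not available in this generality, and the argument as written breaks at exactly the point where the series over $j$ must be summed.

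The fix --- and this is what the paper does --- is to work pathwise rather than in expectation. After the bound \eqref{eq:esqm} on each $[j,j+1]$, one is left with the random series $\sum_j(|j|+1)^\theta\mu((j,j+1])^2$ and $\sum_j(|j|+1)^\theta\sum_n 2^{n(1-2\alpha)}\sum_k\mu(\Delta_{kn}^{(j)})^2$. These are finite \emph{a.s.}\ by Lemma~\ref{lm:fkmu}: the associated families $\{f_l\}$ of weighted indicators satisfy $\sum_l|f_l(y)|\le C(|y|+1)^{\theta/2}$, which is $\mu$-integrable by A\ref{assxm} provided $1<\theta\le 2\tau$. This produces a random dominating function $g(\cdot,\omega)\in\llf^2(\rr)$, independent of $t$, with $|\widetilde\vartheta(t,x)|\le Cg(x)$ on a set $\Omega_{\theta,\alpha}$ of full probability; pointwise continuity from \eqref{eq:vervh} plus dominated convergence then finishes. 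Replacing your moment step by this appeal to Lemma~\ref{lm:fkmu} repairs the proof.
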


\begin{proof}
To prove the continuity in $\llf^2(\rr)$, for fixed $\omega\in\Omega$ and arbitrary $t_n\to t_0$, consider
\begin{equation*}
\begin{split}
\|\widetilde{\vartheta}(t_n)-\widetilde{\vartheta}(t_0)\|_{\llf_2(\rr)}^2=\int_{\rr}|\widetilde{\vartheta}(t_n,x)-\widetilde{\vartheta}(t_0,x)|^2\,dx.
\end{split}
\end{equation*}
By~\eqref{eq:vervh}, $\widetilde{\vartheta}$ is continuous in both variables, therefore $\widetilde{\vartheta}(t_n,x)\to\widetilde{\vartheta}(t_0,x)$ for each $x$.

We will find $g\in\llf^2(\rr)$ such that
\begin{equation}\label{eq:thgest}
|\widetilde{\vartheta}(t,x)|=\Bigl|\int_{\rr} \int_0^t p(t-s,x-y) \sigma(s,y)\,ds\,d\mu(y)\Bigr|\le Cg(x),
\end{equation}
then the dominated convergence theorem implies our statement.

We apply Lemma~\ref{lm:essih} for
\[
q(z,y)= \int_0^t p(t-s,x-y) \sigma(s,y)\,ds,\quad z=(t,x)
\]
Our considerations will imply that the function $q(z,y)$ is continuous in~$y$ (see \eqref{eq:esio} and \eqref{eq:esdpss} below), thus the conditions of Lemma~\ref{lm:essih} are satisfied.

We fix $\omega\in\Omega$, $\theta>1$ and estimate
\begin{equation}\label{eq:sumtht}
\begin{split}
|\widetilde{\vartheta}(t,x)|^2=\Bigl|\int_{\rr} \int_0^t p(t-s,x-y) \sigma(s,y)\,ds\,d\mu(y)\Bigr|^2\\
=\Bigl|\sum_{j\in\zz}\int_{(j,j+1]} \int_0^t p(t-s,x-y) \sigma(s,y)\,ds\,d\mu(y)\Bigr|^2\\
\le \sum_{j\in\zz}(|j|+1)^{-\theta} \sum_{j\in\zz}(|j|+1)^{\theta} \Bigl|\int_{(j,j+1]} \int_0^t p(t-s,x-y) \sigma(s,y)\,ds\,d\mu(y)\Bigr|^2\\
=C \sum_{j\in\zz}(|j|+1)^{\theta} \Bigl|\int_{(j,j+1]}  q(t,x,y) \,d\mu(y)\Bigr|^2\\
\stackrel{\eqref{eq:esqm}}{\le}C\sum_{j\in\zz}(|j|+1)^{\theta} \Bigl(|q(t,x,j)\mu((j, j+1])|^2
+ \|q(t)\|^2_{B^{2,2}_\alpha([j,j+1])}
\Bigl\{\sum_{n\ge 1}2^{-n(2\alpha-1)}\sum_{1\le k\le 2^{n}}|\mu(\Delta_{kn}^{(j)})|^2\Bigr\}\Bigr).
\end{split}
\end{equation}

Below we will use the following simple estimates:
\begin{equation}
\begin{split}
\int_0^t\frac1r e^{-\frac{b}{r}} dr\stackrel{{b}/{r} =z}{=}\int_{b/t}^{\infty}\frac{1}{z} e^{-z}\, dz
  \le  \ii_{\{b\ge t\}}\int_{b/t}^{\infty} e^{-z}\,dz+\ii_{\{b<t\}}\Bigl(\int_{b/t}^1\frac{1}{z}\, dz+\int_{1}^{\infty} e^{-z}\,dz\Bigr)\\
	\le  \ii_{\{b\ge t\}}e^{-{b/t}}+\ii_{\{b<t\}} \Bigl(\ln{\frac{t}{b}}+1\Bigr),\label{eq:estb}
\end{split}
\end{equation}
\begin{equation}\label{eq:estbss}
\begin{split}
\int_0^t\frac{1}{\sqrt{t-s}} e^{-\frac{b}{t-s}} ds\le \int_0^t\frac{1}{\sqrt{t-s}} e^{-\frac{b}{T}} ds\le C e^{-\frac{b}{T}}.
\end{split}
\end{equation}

We have that
\begin{equation}\label{eq:estqv}
\begin{split}
|q(t,x,y)|=\Bigl|\int_0^t p(t-s,x-y) \sigma(s,y)\,ds\Bigr|
\le C_\sigma C\int_0^t \dfrac{e^{-\frac{(x-y)^2}{4(t-s)}}}{\sqrt{t-s}}\,ds
\stackrel{\eqref{eq:estbss}}{\le} C e^{-\frac{(x-y)^2}{4T}}.
\end{split}
\end{equation}

To estimate $\|q(t)\|_{B^{2,2}_\alpha([j,j+1])}$, we consider
\begin{equation*}
\begin{split}
q(t,x,y+h)-q(t,x,y)  = \int_0^t p(t-s,x-y)(\sigma(s,y+h)-\sigma(s,y))\,ds\\
 + \int_0^t(p(t-s,x-y-h)-p(t-s,x-y))\sigma(s,y+h)\,ds
:=I_1+I_2.
\end{split}
\end{equation*}

For $I_1$, we obtain
\begin{equation}\label{eq:esio}
\begin{split}
|I_1|\le L_{\sigma}h^{\beta(\sigma)} \int_0^t p(t-s,x-y)\,ds=C h^{\beta(\sigma)} \int_0^t \dfrac{e^{-\frac{(x-y)^2}{4(t-s)}}}{\sqrt{t-s}}\,ds
\stackrel{\eqref{eq:estbss}}{\le} C h^{\beta(\sigma)} e^{-\frac{(x-y)^2}{4T}}.
\end{split}
\end{equation}

For $I_2$, assuming $0\le h\le 1$,  we get
\begin{equation}\label{eq:esdpss}
\begin{split}
 |I_2|\stackrel{\rm A\ref{assxs}}{\le}  C_{\sigma}\int_0^t|p(t-s,x-y-h)-p(t-s,x-y)|\,ds\\
 = C_{\sigma}\int_0^t \Bigl| \int_{x-y-h}^{x-y} \dfrac{\partial p(t-s,v)}{\partial v}\,dv\Bigr|\,ds\\
 \stackrel{\eqref{eq:estpx}}{\le}  C_{\sigma}C_{\lambda}\int_0^t\frac{1}{t-s}\int_{x-y-h}^{x-y} e^{-\frac{\lambda v^2}{t-s}}\,dv\,ds\\
  \stackrel{\eqref{eq:estb}}{\le}   C\int_{x-y-h}^{x-y} e^{-\frac{\lambda v^2 }{t}} \ii_{ \{\lambda v^2\ge t\} }\,dv
+C\int_{x-y-h}^{x-y}\Bigl(\Bigl|\ln{\frac{T}{\lambda v^2}}\Bigr|+1\Bigr)\ii_{ \{\lambda v^2< t\} }dv\\
  {\le} C\int_{x-y-h}^{x-y} e^{-\frac{\lambda v^2}{t}} \ii_{ \{\lambda v^2\ge t\} }\,dv +C\int_{x-y-h}^{x-y}|\ln{|v|}|\ii_{ \{\lambda v^2< t\} }dv\\
 \stackrel{(*)}{\le}   C\int_{x-y-h}^{x-y} e^{-\frac{\lambda v^2}{t}} \ii_{ \{\lambda v^2\ge t\} }\,dv +C\ii_{ \{|x-y|< \sqrt{t/\lambda}+1\}}\int_0^{h/2}|\ln z|\,dz\\
 \stackrel{(**)}{\le}  Ch e^{-\frac{\tilde{\lambda} (x-y)^2}{T}}+C\ii_{ \{|x-y|< \sqrt{t/\lambda}+1\}}(z-z\ln z)\Big|_0^{h/2}\\
\le Ch e^{-\frac{\tilde{\lambda} (x-y)^2}{T}} +Ch |\ln h|\ii_{ \{|x-y|< \sqrt{t/\lambda}+1\}} \\
\le Ch e^{-\frac{\tilde{\lambda} (x-y)^2 }{T}} +C h^{\beta(\sigma)}\ii_{ \{|x-y|< \sqrt{T/\lambda}+1\}}
\le C h^{\beta(\sigma)} e^{-\frac{\tilde{\lambda} (x-y)^2}{T}} .
\end{split}
\end{equation}
In (*) we have used that the maximal value of $\int_{x_1}^{x_2}|\ln{|v|}|\ii_{ \{|v|< C\} }dv$ for fixed $|x_1-x_2|$ is achieved when $x_2=-x_1$.

In (**) we applied that for $0<\tilde{\lambda}<\lambda$, $x-y-h\le v\le x-y$ holds
\begin{equation*}
\begin{split}
   Ce^{-\frac{\lambda v^2 }{T}}\le e^{-\frac{\tilde{\lambda} (x-y)^2 }{T}}.
\end{split}
\end{equation*}

Further, we note that  for $y\in[j,j+1]$
\begin{equation*}
  e^{-\frac{(x-y)^2\tilde{\lambda}}{T}}\le   e^{\frac{(1-(|x-j|-1)^2)\tilde{\lambda}}{T}},
\end{equation*}
and \eqref{eq:esio} and \eqref{eq:esdpss} imply that
\begin{equation}\label{eq:estqd}
|q(t,x,y+h)-q(t,x,y)|\le   C h^{\beta(\sigma)} e^{\frac{(1-(|x-j|-1)^2)\tilde{\lambda}}{T}}.
\end{equation}

Now, from \eqref{eq:bssn}, \eqref{eq:estqv}, and~\eqref{eq:estqd}, for some $1/2<\alpha<\beta(\sigma)$ we get that
\begin{equation}\label{eq:estqb}
   \|q(t)\|_{B^{2,2}_\alpha([j,j+1])}\le C e^{\frac{(1-(|x-j|-1)^2)\tilde{\lambda}}{T}}.
\end{equation}

From \eqref{eq:sumtht}, \eqref{eq:estqv}, and \eqref{eq:estqb} we obtain~\eqref{eq:thgest}, where
\begin{equation}\label{eq:defg}
\begin{split}
g^2(x)=\sum_{j\in\zz}(|j|+1)^{\theta} \Bigl(e^{\frac{2(1-(|x-j|-1)^2)\tilde{\lambda}}{T}}\mu^2((j, j+1])\\
+ e^{\frac{2(1-(|x-j|-1)^2)\tilde{\lambda}}{T}}
\Bigl\{\sum_{n\ge 1}2^{-n(2\alpha-1)}\sum_{1\le k\le 2^{n}}|\mu(\Delta_{kn}^{(j)})|^2\Bigr\}\Bigr).
\end{split}
\end{equation}

We will check that $g\in\llf^2(\rr)$, and get
\begin{equation}\label{eq:estig}
\begin{split}
\int_{\rr}g^2(x)\,dx{\le}  C\sum_{j\in\zz}(|j|+1)^{\theta} \mu^2((j, j+1])\\
+C\sum_{j\in\zz}(|j|+1)^{\theta} \Bigl\{\sum_{n\ge 1}2^{-n(2\alpha-1)}\sum_{1\le k\le 2^{n}}|\mu(\Delta_{kn}^{(j)})|^2\Bigr\}.
\end{split}
\end{equation}

Here the sums with $\mu$ have a form
 $\sum_{l=1}^{\infty}\Bigl(\int_{\rr} f_l\,d\mu\Bigr)^2$, where
\begin{eqnarray*}
\{f_l (y),\ l\ge 1\}  =  \{(|j|+1)^{\theta/2}\, \ii_{(j, j+1]}(y),\ j\in \zz\},\\
\{f_l (y),\ l\ge 1\}  =  \{(|j|+1)^{\theta/2} 2^{-n(\alpha-1/2)}
\ii_{\Delta_{kn}^{(j)}}(y),\ j\in \zz,\ n\ge 1,\ 1\le k\le 2^{n}\}.
\end{eqnarray*}
We have
\[
\sum_{l=1}^{\infty}|f_l(y)|\le C(|y|+1)^{\theta/2}.
\]
By A\ref{assxm}, $\sum_{l=1}^{\infty}|f_l|$ is integrable w.r.t. $\mu$ on $\rr$ in both cases. From Lemma~\ref{lm:fkmu} it follows that for
\begin{equation}\label{eq:omta}
\Omega_{\theta,\alpha}=\Bigl\{\omega\in\Omega:\ \sum_{l=1}^{\infty}\Bigl(\int_{\xx} f_l\,d\mu \Bigr)^2<+\infty\Bigr\}
\end{equation}
holds $\pp(\Omega_{\theta,\alpha}=1)$. For each $\omega\in\Omega_{\theta,\alpha}$, $g(\cdot,\omega)\in\llf^2(\rr)$, and $\widetilde{\vartheta}(\cdot,\omega)\in \cc([0,T],\llf^2(\rr))$.
\end{proof}

In particular, Lemma~\ref{lm:thcont} implies that for each $\omega\in\Omega_{\theta,\alpha}$ holds
\begin{equation}\label{eq:thnbound}
\sup_{t\in[0,T]} \|\widetilde{\vartheta}(t)\|_{\llf^2(\rr)}<\infty.
\end{equation}

We will need one more boundness result about $\widetilde{\vartheta}$.

\begin{lemma}\label{lm:thbound} Let Assumptions A\ref{assxs} and A\ref{assxm} hold. Then, for version $\widetilde{\vartheta}$ that satisfies \eqref{eq:vervh}, we have
\[
\sup_{t\in [0,T], x\in\rr}|\widetilde{\vartheta}(t,x)|<\infty\ {\rm a.s.}
\]
\end{lemma}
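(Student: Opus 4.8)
The plan is to reuse the machinery built in the proof of Lemma~\ref{lm:thcont}, pushing the pointwise bound \eqref{eq:thgest}--\eqref{eq:defg} one step further so that it becomes uniform in both $t$ and $x$. Recall that in that proof we obtained, for each $\omega\in\Omega_{\theta,\alpha}$, the bound $|\widetilde{\vartheta}(t,x)|^2\le g^2(x)$ with $g^2$ given by \eqref{eq:defg}; crucially, the estimates \eqref{eq:estqv} and \eqref{eq:estqb} are uniform in $t\in[0,T]$ (all the $t$-dependence was already absorbed into constants via \eqref{eq:estbss} and the replacement of $t$ by $T$ in exponents). So the only remaining task is to show that $\sup_{x\in\rr}g(x)<\infty$ a.s., rather than merely $g\in\llf^2(\rr)$.

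First I would fix $\omega$ in a suitable full-measure event and examine the $x$-dependence of the summand in \eqref{eq:defg}. For $x$ in a fixed unit interval, only finitely many $j$ contribute a factor close to $1$ in $e^{\frac{2(1-(|x-j|-1)^2)\tilde\lambda}{T}}$, and for the remaining $j$ this Gaussian-type factor decays like $e^{-c(x-j)^2}$, which beats the polynomial weight $(|j|+1)^\theta$. Hence, uniformly in $x$,
\[
g^2(x)\le C\sum_{j\in\zz}(|j|+1)^{\theta}e^{-c\,\mathrm{dist}(x,j)^2}\Bigl(\mu^2((j,j+1])+\sum_{n\ge1}2^{-n(2\alpha-1)}\sum_{1\le k\le2^n}|\mu(\Delta_{kn}^{(j)})|^2\Bigr).
\]
The next step is to bound this supremum over $x$ by a single a.s.-finite random quantity. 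The natural move is to dominate, for each $j$, the weight $(|j|+1)^\theta e^{-c\,\mathrm{dist}(x,j)^2}$ by $C(|j|+1)^{\theta+1}$ uniformly in $x$ (absorbing the decay into a crude constant) — but that destroys summability; better is to keep the decay: since $\sum_{x\in[m,m+1]}$ picks up only $j$ with $|j-m|$ bounded via the exponential, one gets $g^2(x)\le C(\omega)$ where $C(\omega)=C\sup_{m\in\zz}\sum_{j\in\zz}(|j|+1)^\theta e^{-c(j-m)^2}(\cdots)$, and then bounding $(|j|+1)^\theta\le C(|j-m|+1)^\theta(|m|+1)^\theta$ and summing the convergent series $\sum_j(|j-m|+1)^\theta e^{-c(j-m)^2}$ reduces matters to showing
\[
\sup_{m\in\zz}(|m|+1)^{\theta}e^{-c\,m^{\,?}}\Bigl(\mu^2((m,m+1])+\cdots\Bigr)<\infty\ \text{a.s.}
\]
— which still has the same polynomial growth in $m$. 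The cleanest fix is to choose a slightly larger exponent $\theta'>\theta$ in Assumption~A\ref{assxm} (which we are free to do, as A\ref{assxm} gives integrability of $y^\tau$ for some $\tau>1/2$ and the construction of $\Omega_{\theta,\alpha}$ works for any admissible parameters), so that $\sum_j (|j|+1)^{\theta}/(|j|+1)^{\theta'}<\infty$, and then invoke Lemma~\ref{lm:fkmu} with the weights $(|j|+1)^{\theta'/2}$ to conclude that the tail sums $(|j|+1)^{\theta'}(\mu^2((j,j+1])+\cdots)$ are summable, hence bounded in $j$, a.s.

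Concretely, the argument I would write is: on the event $\Omega_{\theta',\alpha}$ (defined as in \eqref{eq:omta} but with $\theta$ replaced by $\theta'>\theta$), the series $\sum_{j\in\zz}(|j|+1)^{\theta'}\mu^2((j,j+1])$ and $\sum_{j\in\zz}(|j|+1)^{\theta'}\sum_{n\ge1}2^{-n(2\alpha-1)}\sum_{k}|\mu(\Delta_{kn}^{(j)})|^2$ converge, so their general terms are bounded by some $C(\omega)$; therefore
\[
g^2(x)\le C(\omega)\sum_{j\in\zz}(|j|+1)^{\theta-\theta'}e^{\frac{2(1-(|x-j|-1)^2)\tilde\lambda}{T}}\le C(\omega)\sum_{j\in\zz}(|j|+1)^{\theta-\theta'}e^{c-c'(x-j)^2},
\]
and the last series is bounded uniformly in $x\in\rr$ (for each $x$ it is dominated by $\sum_j(|j|+1)^{\theta-\theta'}\cdot C$, convergent since $\theta-\theta'<0$... ) — and combined with $|\widetilde\vartheta(t,x)|\le Cg(x)$ uniformly in $t$, this gives $\sup_{t\in[0,T],x\in\rr}|\widetilde\vartheta(t,x)|\le C(\omega)$ a.s., as claimed. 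The main obstacle, and the point requiring genuine care rather than routine bookkeeping, is precisely this tension between the polynomial weight $(|j|+1)^\theta$ needed to reconstruct $|\widetilde\vartheta(t,x)|^2$ from the dyadic pieces in \eqref{eq:sumtht} and the need for a summable-in-$j$ (hence uniformly-in-$x$-bounded) final series: one must verify that A\ref{assxm} indeed permits a strictly larger exponent $\theta'$ while keeping the Besov regularity parameter $\alpha\in(1/2,\beta(\sigma))$ fixed, so that the a.s.-finiteness supplied by Lemma~\ref{lm:fkmu} can be spent on producing the extra decay $(|j|+1)^{\theta-\theta'}$ instead of only $\llf^2$-integrability in $x$.
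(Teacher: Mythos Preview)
Your plan --- reuse the pointwise bound $|\widetilde\vartheta(t,x)|\le Cg(x)$ from Lemma~\ref{lm:thcont} and then show $\sup_{x}g(x)<\infty$ a.s.\ --- is exactly the paper's. But the paper's execution is a one-liner, whereas your detour through a second exponent $\theta'$ both overcomplicates matters and contains a genuine error.

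The paper simply notes that the exponential weight in \eqref{eq:defg} satisfies
\[
e^{\frac{2(1-(|x-j|-1)^2)\tilde\lambda}{T}}\le e^{2\tilde\lambda/T}
\]
for all $x,j$, since $1-(|x-j|-1)^2\le 1$. Replacing it by this constant gives, for every $x\in\rr$,
\[
g^2(x)\le C\sum_{j\in\zz}(|j|+1)^{\theta}\mu^2((j,j+1])
+C\sum_{j\in\zz}(|j|+1)^{\theta}\sum_{n\ge1}2^{-n(2\alpha-1)}\sum_{1\le k\le2^n}|\mu(\Delta_{kn}^{(j)})|^2,
\]
which is precisely the right-hand side of \eqref{eq:estig} and is finite on $\Omega_{\theta,\alpha}$ by the same application of Lemma~\ref{lm:fkmu}. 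The ``tension'' you describe is illusory: you separated the factor $(|j|+1)^\theta$ from the factor $\mu^2((j,j+1])+\cdots$, forgetting that their \emph{product} is already summable in $j$ on $\Omega_{\theta,\alpha}$ --- that is the entire content of this event.

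Your alternative route also has a concrete gap. You assert that $\sum_{j\in\zz}(|j|+1)^{\theta-\theta'}$ converges ``since $\theta-\theta'<0$'', but over $\zz$ this requires $\theta'-\theta>1$, not merely $\theta'>\theta$. Under A\ref{assxm} the admissible choices satisfy $1<\theta<\theta'\le 2\tau$, so $\theta'-\theta>1$ forces $\tau>1$, which is \emph{not} assumed. For $\tau\in(1/2,1]$ your argument as written fails; to repair it you would have to retain the Gaussian decay $e^{-c'(x-j)^2}$ rather than discard it --- and at that point the extra exponent $\theta'$ buys nothing and you are back to the paper's one-line observation.
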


\begin{proof} In the proof of Lemma~\ref{lm:thcont}, for function $g$ defined in~\eqref{eq:defg}, we obtained that estimate~\eqref{eq:thgest} holds. From \eqref{eq:defg} it follows that
\begin{equation*}
\begin{split}
g^2(x){\le}  C\sum_{j\in\zz}(|j|+1)^{\theta} \mu^2((j, j+1])
+C\sum_{j\in\zz}(|j|+1)^{\theta} \Bigl\{\sum_{n\ge 1}2^{-n(2\alpha-1)}\sum_{1\le k\le 2^{n}}|\mu(\Delta_{kn}^{(j)})|^2\Bigr\}.
\end{split}
\end{equation*}
Further, we repeat the proof of Lemma~\ref{lm:thcont} after \eqref{eq:estig}, and for each $\omega\in\Omega_{\theta,\alpha}$ obtain that $\sup_{x\in\rr}|g(x,\omega)|<\infty$.
\end{proof}

\section{Solution to the auxiliary equation}
\label{sc:burgaux}

Consider the operator $\pi_N:\llf^2(\rr)\to \llf^2(\rr)$ such that
\begin{equation*}
\pi_N(v)=\left\{
\begin{array}{ll}
  v,\quad & \|v\|_{\llf^2(\rr)}\le N,\\
  N\dfrac{v}{\|v\|_{\llf^2(\rr)}}, \quad & \|v\|_{\llf^2(\rr)}>N.
\end{array}
\right.
\end{equation*}
For Hilbert space $\llf^2(\rr)$, it is easy to check that
\begin{equation}\label{eq:difpin}
\|\pi_N(v)-\pi_N(w)\|_{\llf^2(\rr)}\le \|v-w\|_{\llf^2(\rr)}.
\end{equation}

In this section, we study the auxiliary equation
\begin{equation}\label{eq:bemfpiv}
\begin{split}
u(t,x)=\int_{\rr} p(t,x-y)u_0(y)\,dy
+\int_0^t \int_{\rr}p(t-s,x-y) f(s,y,\pi_N(u)(s,y))\,dy\,ds\\
-\int_0^t \int_{\rr}\frac{\pr p}{\pr y}(t-s,x-y) g(s,y,\pi_N(u)(s,y))\,dy\,ds
+\widetilde{\vartheta}(t,x).
\end{split}
\end{equation}
We consider~\eqref{eq:bemfpiv} for fixed  $\omega\in\Omega_{\theta,\alpha}$ (see~\eqref{eq:omta}), and will prove the existence and uniqueness of the solution.

Let us consider the linear operators
\begin{eqnarray*}
(J_1 v)(t,x)=\int_0^t \int_{\rr} p(t-s,x-y) v(s,y)\,dy\,ds,\\
(J_2 w)(t,x)=\int_0^t \int_{\rr} \dfrac{\pr p}{\pr y}(t-s,x-y) w(s,y)\,dy\,ds,
\end{eqnarray*}
where $t\in [0,T]$, $x\in\rr$, $v,w\in \llf^{\infty}([0,T],\llf^2(\rr))$.

We recall some lemmas from~\cite{gyonua}.

\begin{lemma} (Lemma 3.1~\cite{gyonua}) The operator $J_1$ is bounded from $\llf^2([0,T],\llf^2(\rr))$ to $\cc([0,T],\llf^2(\rr))$, and the following estimates hold:
\begin{equation}
\label{eq:estj1d}
\|J_1 v(t)-J_1 v(r)\|_{\llf^2(\rr)}\le C|t-r|^{1/3}\Bigl(\int_0^t 	\|v(s)\|_{\llf^2(\rr)}^2\,ds\Bigr)^{1/2},
\end{equation}
where $0\le r, t\le T$.
\end{lemma}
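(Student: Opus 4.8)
The idea is to view $J_1$ as the Duhamel operator for the heat semigroup and to exploit the contraction and smoothing properties of that semigroup on $\llf^2(\rr)$. Write $(S(\tau)\phi)(x)=\int_{\rr}p(\tau,x-y)\phi(y)\,dy$ for $\tau>0$ and $S(0)=\mathrm{Id}$, so that $(J_1v)(t)=\int_0^t S(t-s)v(s)\,ds$ as a Bochner integral in $\llf^2(\rr)$. Since $p(\tau,\cdot)\ge0$ and $\int_{\rr}p(\tau,x)\,dx=1$, Young's convolution inequality — equivalently Plancherel together with $\widehat{S(\tau)\phi}(\xi)=e^{-\tau\xi^2}\widehat{\phi}(\xi)$ — gives the contraction $\|S(\tau)\phi\|_{\llf^2(\rr)}\le\|\phi\|_{\llf^2(\rr)}$. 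Hence
\[
\|(J_1v)(t)\|_{\llf^2(\rr)}\le\int_0^t\|v(s)\|_{\llf^2(\rr)}\,ds\le t^{1/2}\Bigl(\int_0^t\|v(s)\|_{\llf^2(\rr)}^2\,ds\Bigr)^{1/2}\le T^{1/2}\|v\|_{\llf^2([0,T],\llf^2(\rr))},
\]
which is the asserted boundedness, provided we also establish continuity in $t$.

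For the increment I would fix $0\le r\le t\le T$ (the case $t<r$ being symmetric, with $\int_0^t$ replaced by $\int_0^r$) and decompose
\[
(J_1v)(t)-(J_1v)(r)=\int_r^tS(t-s)v(s)\,ds+\int_0^r\bigl(S(t-s)-S(r-s)\bigr)v(s)\,ds=:A_1+A_2 .
\]
By the contraction bound, $\|A_1\|_{\llf^2(\rr)}\le\int_r^t\|v(s)\|_{\llf^2(\rr)}\,ds\le(t-r)^{1/2}\bigl(\int_0^t\|v(s)\|_{\llf^2(\rr)}^2\,ds\bigr)^{1/2}$. For $A_2$ I would write $S(t-s)-S(r-s)=S(r-s)(S(t-r)-\mathrm{Id})$ and estimate in Fourier variables using the elementary inequalities $|e^{-a}-1|\le a^{\gamma}$ (for $a\ge0$, $\gamma\in[0,1]$) and $\sup_{\xi}|\xi|^{4\gamma}e^{-2(r-s)\xi^2}\le C_{\gamma}(r-s)^{-2\gamma}$; this yields, for every $\gamma\in(0,1/2)$,
\[
\bigl\|\bigl(S(t-s)-S(r-s)\bigr)v(s)\bigr\|_{\llf^2(\rr)}\le C(t-r)^{\gamma}(r-s)^{-\gamma}\|v(s)\|_{\llf^2(\rr)} ,
\]
whence, by the Cauchy--Schwarz inequality in $s$ (the weight $(r-s)^{-2\gamma}$ being integrable on $[0,r]$ exactly because $\gamma<1/2$),
\[
\|A_2\|_{\llf^2(\rr)}\le C(t-r)^{\gamma}\Bigl(\int_0^r(r-s)^{-2\gamma}\,ds\Bigr)^{1/2}\Bigl(\int_0^t\|v(s)\|_{\llf^2(\rr)}^2\,ds\Bigr)^{1/2}\le C_T(t-r)^{\gamma}\Bigl(\int_0^t\|v(s)\|_{\llf^2(\rr)}^2\,ds\Bigr)^{1/2}.
\]
Choosing $\gamma=1/3$ and using $(t-r)^{1/2}\le T^{1/6}(t-r)^{1/3}$ in the bound for $A_1$ gives \eqref{eq:estj1d}; since moreover $\|(J_1v)(t)\|_{\llf^2(\rr)}\to0$ as $t\to0$, this Hölder estimate shows $J_1v\in\cc([0,T],\llf^2(\rr))$, completing the argument.

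The one genuinely delicate point is the term $A_2$: the operator $S(t-r)-\mathrm{Id}$ is \emph{not} uniformly bounded on $\llf^2(\rr)$ as $t-r\to0$, so the gain $(t-r)^{\gamma}$ must be extracted from the smoothing of $S(r-s)$ at the price of the singular weight $(r-s)^{-\gamma}$, and one must keep $\gamma<1/2$ to retain time-integrability. If one prefers to avoid the Fourier transform, the same estimate for $A_2$ follows from $\|p(t-s,\cdot)-p(r-s,\cdot)\|_{\llf^1(\rr)}\le\min\bigl(2,\,C\ln\tfrac{t-s}{r-s}\bigr)\le C(t-r)^{\gamma}(r-s)^{-\gamma}$ — using $p(t-s,\cdot)-p(r-s,\cdot)=\int_{r-s}^{t-s}\tfrac{\pr^{2}p}{\pr x^{2}}(\tau,\cdot)\,d\tau$ and $\|\tfrac{\pr^{2}p}{\pr x^{2}}(\tau,\cdot)\|_{\llf^1(\rr)}\le C/\tau$ — together with Young's inequality and Cauchy--Schwarz exactly as above.
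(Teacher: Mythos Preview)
The paper does not prove this lemma; it merely quotes it as Lemma~3.1 of~\cite{gyonua} and uses the estimate as a black box. Your argument is therefore not competing against a proof in the paper but supplying one where none is given.

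Your proof is correct and is essentially the standard semigroup argument. The decomposition $A_1+A_2$, the Fourier multiplier bound $|e^{-(t-r)\xi^2}-1|\,e^{-(r-s)\xi^2}\le C_\gamma(t-r)^\gamma(r-s)^{-\gamma}$ for $\gamma\in(0,1)$, and the Cauchy--Schwarz step requiring $2\gamma<1$ are all sound, as is the alternative $\llf^1$-kernel route via $\|\pr_x^2 p(\tau,\cdot)\|_{\llf^1(\rr)}\le C/\tau$. One phrasing is inaccurate: you write that ``$S(t-r)-\mathrm{Id}$ is \emph{not} uniformly bounded on $\llf^2(\rr)$ as $t-r\to0$''. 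In fact $\|S(t-r)-\mathrm{Id}\|_{\llf^2\to\llf^2}=\sup_\xi|e^{-(t-r)\xi^2}-1|=1$ for every $t>r$, so the family \emph{is} uniformly bounded; what you mean (and what your argument actually uses) is that this operator norm does not tend to $0$ as $t-r\to0$, so no positive power of $t-r$ can be extracted from $S(t-r)-\mathrm{Id}$ alone and the smoothing of $S(r-s)$ is indeed needed. With that wording corrected, the proof stands.
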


\begin{lemma} (Lemma 3.2~\cite{gyonua})
The operator $J_2$ is bounded from $\llf^5([0,T],\llf^1(\rr))$ to $\cc([0,T],\llf^2(\rr))$, and the following estimates hold:
\begin{eqnarray}\label{eq:estj2}
\|J_2 w(t)\|_{\llf^2(\rr)}\le C\int_0^t (t-s)^{-3/4}\|w(s)\|_{\llf^1(\rr)}\,ds,\\
\label{eq:estj2d}
\|J_2 w(t)-J_2 w(r)\|_{\llf^2(\rr)}\le C|t-r|^{1/21}\Bigl(\int_0^t 	\|w(s)\|_{\llf^1(\rr)}^5\,ds\Bigr)^{1/5}.
\end{eqnarray}
where $0\le r, t\le T$.
\end{lemma}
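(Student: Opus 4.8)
The plan is to recall Lemma~3.2 from \cite{gyonua}, whose proof relies on a careful splitting of the kernel $\partial p/\partial y$ into a ``singular'' region near $s=t$ and a ``regular'' region, together with the standard bounds~\eqref{eq:estpx} on $\partial p/\partial y$ and on $p$ itself. Since the statement is quoted verbatim from~\cite{gyonua}, our task here is only to indicate how the two estimates~\eqref{eq:estj2} and~\eqref{eq:estj2d} are obtained; the boundedness of $J_2$ from $\llf^5([0,T],\llf^1(\rr))$ to $\cc([0,T],\llf^2(\rr))$ then follows by combining~\eqref{eq:estj2} with H\"older's inequality in the time variable.

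First I would establish~\eqref{eq:estj2}. Fix $t\in[0,T]$ and $s<t$. By~\eqref{eq:estpx}, for any $\lambda\in(0,1/4)$,
\[
\Bigl\|\frac{\pr p}{\pr y}(t-s,\cdot)\Bigr\|_{\llf^2(\rr)}
\le \frac{C_\lambda}{t-s}\Bigl(\int_\rr e^{-\frac{2\lambda v^2}{t-s}}\,dv\Bigr)^{1/2}
= C(t-s)^{-3/4}.
\]
Writing $(J_2 w)(t,\cdot)=\int_0^t \bigl(\tfrac{\pr p}{\pr y}(t-s,\cdot)\ast w(s,\cdot)\bigr)\,ds$ and applying Minkowski's integral inequality together with Young's convolution inequality $\|k\ast f\|_{\llf^2}\le\|k\|_{\llf^2}\|f\|_{\llf^1}$ gives
\[
\|(J_2 w)(t)\|_{\llf^2(\rr)}\le \int_0^t \Bigl\|\frac{\pr p}{\pr y}(t-s,\cdot)\Bigr\|_{\llf^2(\rr)}\|w(s)\|_{\llf^1(\rr)}\,ds
\le C\int_0^t (t-s)^{-3/4}\|w(s)\|_{\llf^1(\rr)}\,ds,
\]
which is~\eqref{eq:estj2}. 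Since $(t-s)^{-3/4}\in\llf^{p}$ near $s=t$ for $p<4/3$, H\"older with exponents $5/4$ and $5$ shows the right-hand side is finite and controlled by $\|w\|_{\llf^5([0,T],\llf^1(\rr))}$, proving $J_2 w(t)\in\llf^2(\rr)$ with the asserted boundedness; continuity in $t$ follows from~\eqref{eq:estj2d}.

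For the H\"older bound~\eqref{eq:estj2d}, I would fix $r<t$ and split
\[
(J_2 w)(t)-(J_2 w)(r)=\int_r^t \frac{\pr p}{\pr y}(t-s,\cdot)\ast w(s)\,ds
+\int_0^r\Bigl(\frac{\pr p}{\pr y}(t-s,\cdot)-\frac{\pr p}{\pr y}(r-s,\cdot)\Bigr)\ast w(s)\,ds.
\]
The first term is estimated as above by $C\int_r^t(t-s)^{-3/4}\|w(s)\|_{\llf^1}\,ds$, and H\"older in $s$ over $[r,t]$ yields a factor $|t-r|^{(1-3/4)\cdot(4/5)}=|t-r|^{1/5}$ against $\bigl(\int_0^t\|w(s)\|_{\llf^1}^5\,ds\bigr)^{1/5}$. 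For the second term one uses the semigroup/mean-value estimate
\[
\Bigl\|\frac{\pr p}{\pr y}(t-s,\cdot)-\frac{\pr p}{\pr y}(r-s,\cdot)\Bigr\|_{\llf^2(\rr)}
\le C\,|t-r|\,(r-s)^{-7/4}
\]
(obtained by differentiating the Gaussian bound in the time variable and integrating), and then interpolates this with the crude bound $C(r-s)^{-3/4}$ to produce an integrable singularity at $s=r$ with a power of $|t-r|$ that, after H\"older in $s$, combines with the first term to give the common exponent $1/21$. The main obstacle is precisely this bookkeeping: choosing the interpolation parameter and the H\"older exponents so that the time singularities stay integrable while the two contributions yield the same (necessarily small) power $|t-r|^{1/21}$ of the increment; the underlying analytic inputs are just the Gaussian kernel bounds~\eqref{eq:estpx} and Young's inequality, so no genuinely new idea is needed beyond what is already in~\cite{gyonua}.
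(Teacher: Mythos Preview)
The paper does not give its own proof of this lemma: it is quoted verbatim as Lemma~3.2 of~\cite{gyonua} and used as a black box. So there is nothing in the paper to compare your argument against beyond the citation itself.

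Your sketch is a faithful reconstruction of the standard proof one finds in~\cite{gyonua}: Young's convolution inequality together with $\|\partial_y p(t-s,\cdot)\|_{\llf^2(\rr)}=C(t-s)^{-3/4}$ gives~\eqref{eq:estj2}, and the increment bound~\eqref{eq:estj2d} follows from the splitting you describe plus interpolation between $(r-s)^{-3/4}$ and $|t-r|\,(r-s)^{-7/4}$. One arithmetic slip worth correcting: for the ``near'' piece $\int_r^t(t-s)^{-3/4}\|w(s)\|_{\llf^1}\,ds$, H\"older with exponents $5/4$ and $5$ gives
\[
\Bigl(\int_r^t (t-s)^{-15/16}\,ds\Bigr)^{4/5}\Bigl(\int_r^t\|w(s)\|_{\llf^1}^5\,ds\Bigr)^{1/5}
= C\,|t-r|^{1/20}\Bigl(\int_0^t\|w(s)\|_{\llf^1}^5\,ds\Bigr)^{1/5},
\]
not $|t-r|^{1/5}$ as you wrote. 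Since $1/20>1/21$ this does not affect the conclusion; the interpolation parameter for the ``far'' piece must then be chosen strictly below $1/20$, and $1/21$ is precisely such a choice, which explains the exponent in~\eqref{eq:estj2d}.
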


The following statement is the analogue of Proposition~4.1~\cite{gyonua}.

\begin{lemma} Let A\ref{assxfall} and A\ref{assxgall} hold, and
\[
u_0\in\llf^2(\rr),\quad |\widetilde{\vartheta}(t,x)|\le C(\omega),\quad \|\widetilde{\vartheta}(t)\|_{\llf^2(\rr)}\le C(\omega)
\]
for some finite constant $C(\omega)$. Then for any fixed $N>0$ and $\omega\in\Omega_{\theta,\alpha}$ there equation \eqref{eq:bemfpiv} has a unique solution $u\in\cc([0,T],\llf^2(\rr))$.
\end{lemma}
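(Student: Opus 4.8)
The plan is to recast equation~\eqref{eq:bemfpiv} as a fixed-point problem $u = \Phi(u)$ on the Banach space $\cc([0,T],\llf^2(\rr))$ and to apply the Banach contraction principle on a suitably short time interval, then extend to $[0,T]$ by iteration. Explicitly, set
\[
(\Phi u)(t,x)=\int_{\rr} p(t,x-y)u_0(y)\,dy
+ (J_1 F(u))(t,x) - (J_2 G(u))(t,x) + \widetilde{\vartheta}(t,x),
\]
where $F(u)(s,y)=f(s,y,\pi_N(u)(s,y))$ and $G(u)(s,y)=g(s,y,\pi_N(u)(s,y))$. First I would check that $\Phi$ maps $\cc([0,T],\llf^2(\rr))$ into itself. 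The first term lies in $\cc([0,T],\llf^2(\rr))$ since the heat semigroup is strongly continuous on $\llf^2(\rr)$ and $u_0\in\llf^2(\rr)$; the last term is there by hypothesis (and by Lemma~\ref{lm:thcont}). For the $J_1$ term one needs $F(u)\in\llf^2([0,T],\llf^2(\rr))$: by A\ref{assxfall}, $\|F(u)(s)\|_{\llf^2(\rr)}\le \|a_1\|_{\llf^2(\rr)}+K\|\pi_N(u)(s)\|_{\llf^2(\rr)}\le \|a_1\|_{\llf^2(\rr)}+KN$, which is bounded in $s$; then the first Lemma of~\cite{gyonua} gives $J_1 F(u)\in\cc([0,T],\llf^2(\rr))$. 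For the $J_2$ term one needs $G(u)\in\llf^5([0,T],\llf^1(\rr))$: writing $g=g_1+g_2$, by A\ref{assxgall} we get $\|g_1(s,\cdot,\pi_N(u)(s,\cdot))\|_{\llf^1(\rr)}\le \|b_1\|_{\llf^1(\rr)}+\|b_2\|_{\llf^2(\rr)}\|\pi_N(u)(s)\|_{\llf^2(\rr)}\le \|b_1\|_{\llf^1(\rr)}+\|b_2\|_{\llf^2(\rr)}N$ and $\|g_2(s,\pi_N(u)(s,\cdot))\|_{\llf^1(\rr)}\le K\|\pi_N(u)(s)\|_{\llf^2(\rr)}^2\le KN^2$, so $\|G(u)(s)\|_{\llf^1(\rr)}$ is bounded uniformly in $s$, hence in $\llf^5([0,T],\llf^1(\rr))$; the second Lemma of~\cite{gyonua} then gives $J_2 G(u)\in\cc([0,T],\llf^2(\rr))$.

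Next I would establish the contraction estimate. Using the key cut-off bound~\eqref{eq:difpin}, $\|\pi_N(v)-\pi_N(w)\|_{\llf^2(\rr)}\le\|v-w\|_{\llf^2(\rr)}$, together with the Lipschitz conditions in A\ref{assxfall} and A\ref{assxgall}, I bound the difference of the nonlinear terms. For $f$: since $|f(s,y,r_1)-f(s,y,r_2)|\le(a_2(y)+L|r_1|+L|r_2|)|r_1-r_2|$, with $r_i=\pi_N(\cdot)(s,y)$ bounded pointwise only in $\llf^2$-norm, I would use the Cauchy–Schwarz inequality and the fact that $\pi_N$ has range bounded by $N$ in $\llf^2$ to get
\[
\|F(v)(s)-F(w)(s)\|_{\llf^2(\rr)}\le C(N)\,\|v(s)-w(s)\|_{\llf^2(\rr)},
\]
and similarly $\|G(v)(s)-G(w)(s)\|_{\llf^1(\rr)}\le C(N)\|v(s)-w(s)\|_{\llf^2(\rr)}$ using A\ref{assxgall} (note $b_3\in\llf^2$, and the $L|r_i|$ terms paired against $|r_1-r_2|$ give $\llf^1$ norms controlled by products of $\llf^2$ norms). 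Feeding these into~\eqref{eq:estj1d} and~\eqref{eq:estj2} (or their sup-norm consequences), one obtains, for $0\le t\le T_0$,
\[
\sup_{t\le T_0}\|(\Phi v)(t)-(\Phi w)(t)\|_{\llf^2(\rr)}\le C(N)\bigl(T_0^{1/2}+T_0^{1/4}\bigr)\sup_{t\le T_0}\|v(t)-w(t)\|_{\llf^2(\rr)},
\]
where the $T_0^{1/2}$ comes from $J_1$ (via $\|v\|_{\llf^2([0,T_0],\llf^2)}\le T_0^{1/2}\sup\|v\|_{\llf^2}$) and the $T_0^{1/4}$ from $\int_0^t(t-s)^{-3/4}\,ds = Ct^{1/4}$ in~\eqref{eq:estj2}. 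Choosing $T_0=T_0(N)$ small enough that $C(N)(T_0^{1/2}+T_0^{1/4})<1$ makes $\Phi$ a contraction on $\cc([0,T_0],\llf^2(\rr))$, yielding a unique solution there.

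Finally I would iterate: the interval length $T_0$ depends only on $N$ and the constants in the assumptions, not on the initial data, so starting from $u(T_0)\in\llf^2(\rr)$ one solves on $[T_0,2T_0]$, and after finitely many steps covers $[0,T]$; matching the solutions at the endpoints and using the continuity in $\cc([0,T_0],\llf^2(\rr))$ on each piece gives a global solution $u\in\cc([0,T],\llf^2(\rr))$, and uniqueness on $[0,T]$ follows from uniqueness on each subinterval. The main obstacle I anticipate is the contraction estimate for the $J_2$ term: because $g_2$ has quadratic growth, the difference $g_2(s,\pi_N(v))-g_2(s,\pi_N(w))$ carries a factor $|\pi_N(v)|+|\pi_N(w)|$ which is only $\llf^2$-bounded pointwise-in-space, so one must be careful to organise the Hölder inequality so that the resulting bound is $C(N)\|v(s)-w(s)\|_{\llf^2(\rr)}$ in $\llf^1(\rr)$-norm and then verify that~\eqref{eq:estj2} indeed converts an $\llf^1$-in-space bound that is merely bounded (hence in $\llf^5$) in time into a small $\cc([0,T_0],\llf^2(\rr))$ bound via the integrable singularity $(t-s)^{-3/4}$; this is exactly the point where the precise exponents in Lemma 3.2 of~\cite{gyonua} are used, mirroring Proposition~4.1 there.
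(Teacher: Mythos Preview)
Your overall strategy---contraction on $\cc([0,T_0],\llf^2(\rr))$ for small $T_0$ followed by iteration---is a legitimate alternative to the paper's route, which instead works in the weighted space $\hh$ with norm $\|u\|_\hh^2=\int_0^T e^{-\lambda t}\|u(t)\|_{\llf^2(\rr)}^2\,dt$, chooses $\lambda$ large to force a global contraction, and then checks continuity in a separate step. Your method has the advantage of building continuity directly into the fixed-point space; the paper's avoids the iteration but pays the price of a two-step argument.

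There is, however, a genuine gap in your contraction estimate for the $J_1$ term. The bound
\[
\|F(v)(s)-F(w)(s)\|_{\llf^2(\rr)}\le C(N)\,\|v(s)-w(s)\|_{\llf^2(\rr)}
\]
does \emph{not} follow from Cauchy--Schwarz and the $\llf^2$-bound $\|\pi_N(u)\|_{\llf^2}\le N$. The pointwise Lipschitz bound gives a product $(a_2+L|\pi_N v|+L|\pi_N w|)\cdot|\pi_N v-\pi_N w|$ of two $\llf^2$ functions, and Cauchy--Schwarz places such a product only in $\llf^1(\rr)$, not $\llf^2(\rr)$; there is no $\llf^\infty$ or $\llf^4$ control available from $\pi_N$. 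Consequently you cannot feed $F(v)-F(w)$ into \eqref{eq:estj1d}, which requires $\llf^2([0,T],\llf^2(\rr))$ input. The paper resolves this by bypassing \eqref{eq:estj1d} for the contraction: it uses Young's convolution inequality $\|p(t-s,\cdot)\ast h\|_{\llf^2}\le \|p(t-s,\cdot)\|_{\llf^2}\|h\|_{\llf^1}$ together with $\|p(t-s,\cdot)\|_{\llf^2}=C(t-s)^{-1/4}$ and the (correct) $\llf^1$ bound on $F(v)(s)-F(w)(s)$, obtaining after H\"older
\[
\|(\aaa_1 v)(t)-(\aaa_1 w)(t)\|_{\llf^2(\rr)}^2\le C_N\int_0^t(t-s)^{-3/4}\|v(s)-w(s)\|_{\llf^2(\rr)}^2\,ds.
\]
With this fix your short-time contraction would read $C(N)(T_0^{3/4}+T_0^{1/4})$ rather than $C(N)(T_0^{1/2}+T_0^{1/4})$, and the iteration goes through as you describe; but as written the $\llf^2$ estimate on $F(v)-F(w)$ is unjustified and the argument breaks at that point.
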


\begin{proof}

\emph{Step 1 (existence and uniqueness of the solution).}

Denote
\begin{equation*}
\begin{split}
(\aaa_1 u)(t,x)=\int_0^t \int_{\rr}p(t-s,x-y) f(s,y,\pi_N(u)(s,y))\,dy\,ds,\\
(\aaa_2 u)(t,x)=-\int_0^t \int_{\rr}\frac{\pr p}{\pr y}(t-s,x-y) g(s,y,\pi_N(u)(s,y))\,dy\,ds.
\end{split}
\end{equation*}

Fix $\lambda>0$. Let $\hh$ denote the Banach space of $\llf^2(\rr)$-valued functions $u(t,x)$ such that $u(0,x)=u_0(x)$, with the norm
\[
\|u(t)\|^2_{\hh}=\int_0^T e^{-\lambda t}\|u(t)\|_{\llf^2(\rr)}^2\,dt.
\]
Define the operator $\aaa:\hh\to\hh$ such that
\begin{equation}\label{eq:operaa}
(\aaa u)(t,x)=\int_{\rr} p(t,x-y)u_0(y)\,dy+(\aaa_1 u)(t,x)+(\aaa_2 u)(t,x)+\widetilde{\vartheta}(t,x).
\end{equation}
We will prove that operator $\aaa$ is a contraction for large $\lambda$.

Using the inequality
\[
\Bigl\|\int_0^t f(s)\,ds\Bigr\|\le \int_0^t \|f(s)\|\,ds,
\]
we have that
\begin{equation}\label{eq:aauaav}
\begin{split}
\|(\aaa_1 u)(t)-(\aaa_1 v)(t)\|_{\llf^2(\rr)}\\
\le \int_0^t \Bigl\|\int_{\rr}p(t-s,x-y) (f(s,y,\pi_N(u)(s,y))-f(s,y,\pi_N(v)(s,y)))\,dy\Bigr\|_{\llf^2(\rr)}\,ds\\
\stackrel{A\ref{assxfall}}{\le}\int_0^t \Bigl\|\int_{\rr}p(t-s,x-y)(a_2(y)+L|\pi_N(u)(s,y)|+L|\pi_N(v)(s,y)|)\\
\times |\pi_N(u)(s,y)-\pi_N(v)(s,y)|\,dy\Bigr\|_{\llf^2(\rr)}\,ds\\
\stackrel{(**)}{\le}\int_0^t \|p(t-s,x-y)\|_{\llf^2(\rr)} \|(a_2(y)+L|\pi_N(u)(s,y)|+L|\pi_N(v)(s,y)|)\\
\times|\pi_N(u)(s,y)-\pi_N(v)(s,y)|\|_{\llf^1(\rr)}\,ds.
\end{split}
\end{equation}
Here in (**) we used the  inequality for convolution $\|v*w\|_{\llf^2(\rr)}\le \|v\|_{\llf^2(\rr)}\|w\|_{\llf^1(\rr)}$. Further, we have
\begin{equation*}
\begin{split}
\|p(t-s,x-y)\|^2_{\llf^2(\rr)}=C (t-s)^{-1/2},\\
\|(a_2(y)+L|\pi_N(u)(s,y)|+L|\pi_N(v)(s,y)|)|\pi_N(u)(s,y)-\pi_N(v)(s,y)|\|_{\llf^1(\rr)}\\
{\le}\|(a_2(y)+L|\pi_N(u)(s,y)|+L|\pi_N(v)(s,y)|\|_{\llf^2(\rr)} \|\pi_N(u)(s,y)-\pi_N(v)(s,y)\|_{\llf^2(\rr)}\\
\stackrel{\eqref{eq:difpin}}{\le} (\|a_2\|_{\llf^2(\rr)}+2LN) \|u(s)-v(s)\|_{\llf^2(\rr)}.
\end{split}
\end{equation*}

Applying the H\"{o}lder inequality, we get
\begin{equation}\label{eq:estao}
\begin{split}
\|(\aaa_1 u)(t)-(\aaa_1 v)(t)\|_{\llf^2(\rr)}^2
\le C_N \Bigl(\int_0^t  (t-s)^{-1/4} \|u(s)-v(s)\|_{\llf^2(\rr)}\,ds\Bigr)^2\\
\le C_N \int_0^t  (t-s)^{-3/4} \|u(s)-v(s)\|_{\llf^2(\rr)}^2\,ds,
\end{split}
\end{equation}
where $C_N$ denotes some constants that may depend on $N$.

For $\aaa_2$, we have
\begin{equation}\label{eq:estud}
\begin{split}
\|(\aaa_2 u)(t)-(\aaa_2 v)(t)\|_{\llf^2(\rr)}^2\\
=\Bigl\|\int_0^t \int_{\rr}\frac{\pr p}{\pr y}(t-s,x-y) (g(s,y,\pi_N(u)(s,y))
-g(s,y,\pi_N(v)(s,y)))\,dy\,ds\Bigr\|_{\llf^2(\rr)}^2\\
\stackrel{\eqref{eq:estj2}}{\le}\Bigl(\int_0^t (t-s)^{-3/4}\|(b_3(y)+L|\pi_N(u)(s,y)|
+L|\pi_N(v)(s,y)|)|\pi_N(u)(s,y)-\pi_N(v)(s,y)|\|_{\llf^1(\rr)}\,ds\Bigr)^2\\
{\le}C\int_0^t (t-s)^{-3/4}\|(b_3(y)+L|\pi_N(u)(s,y)|
+L|\pi_N(v)(s,y)|)|\pi_N(u)(s,y)-\pi_N(v)(s,y)|\|_{\llf^1(\rr)}^2\,ds\\
{\le}C\int_0^t (t-s)^{-3/4}\|b_3(y)+L|\pi_N(u)(s,y)|+L|\pi_N(v)(s,y)|\|_{\llf^2(\rr)}^2\|\pi_N(u)(s,y)-\pi_N(v)(s,y)\|_{\llf^2(\rr)}^2\,ds\\
\le C\int_0^t (t-s)^{-3/4}(\|b_3\|_{\llf^2(\rr)}+2LN)^2\|u(s,y)-v(s,y)\|_{\llf^2(\rr)}^2\,ds\\
=C_N \int_0^t (t-s)^{-3/4}\|u(s)-v(s)\|_{\llf^2(\rr)}^2\,ds.
\end{split}
\end{equation}

Therefore,
\begin{equation*}
\begin{split}
\|\aaa u-\aaa v\|_{\hh}^2= \int_0^T e^{-\lambda t}\|(\aaa u)(t)-(\aaa v)(t)\|^2_{\llf^2(\rr)}\,dt\\
\stackrel{\eqref{eq:estao},\eqref{eq:estud}}{\le} C_N \int_0^T  e^{-\lambda t} \int_0^t (t-s)^{-3/4}\|u(s)-v(s)\|_{\llf^2(\rr)}^2\,ds \,dt\\
= C_N \int_0^T  e^{-\lambda s} \|u(s)-v(s)\|_{\llf^2(\rr)}^2 \int_s^T (t-s)^{-3/4}e^{-\lambda (t-s)} \,dt \,ds\\
\le C_N \int_0^T  e^{-\lambda s} \|u(s)-v(s)\|_{\llf^2(\rr)}^2 \int_0^\infty r^{-3/4}e^{-\lambda r} \,dt \,dr\\
= C_N \|u-v\|_{\hh}^2 \int_0^\infty r^{-3/4}e^{-\lambda r} \,dt ,
\end{split}
\end{equation*}
and for large $\lambda$ we can get
\[
C_N\int_0^\infty r^{-3/4}e^{-\lambda r} \,dt <1.
\]
Then the operator $\aaa$ on $\hh$ is a contraction and has a unique fixed point that is the solution of~\eqref{eq:bemfpiv}.

\emph{Step 2 (continuity of the solution in $\llf^2(\rr)$).}

We will demonstrate that $u\in\cc([0,T],\llf^2(\rr))$ provided that $\aaa u=u$. We consider each term in~\eqref{eq:operaa}, and obtain
\begin{equation*}
\begin{split}
\|(\aaa_1 u)(t)-(\aaa_1 u)(r)\|^2_{\llf^2(\rr)}
\stackrel{\eqref{eq:estj1d}}{\le}(t-r)^{2/3}\int_0^t \|f(s,y,\pi_N(v)(s,y))\|^2_{\llf^2(\rr)}\,ds\\
\stackrel{A\ref{assxfall}}{\le} (t-r)^{2/3} 2T(\|a_1\|^2_{\llf^2(\rr)}+K^2N^2),\\
\|(\aaa_2 u)(t)-(\aaa_2 u)(r)\|^5_{\llf^2(\rr)}\stackrel{\eqref{eq:estj2d}}{\le}(t-r)^{5/21}\int_0^t \|g(s,y,\pi_N(v)(s,y))\|^5_{\llf^1(\rr)}\,ds\\
\stackrel{A\ref{assxgall}}{\le} (t-r)^{5/21}\cdot CT(\|b_1\|^5_{\llf^1(\rr)}+\|b_2\pi_N(v)\|^5_{\llf^1(\rr)}+K^5\|\pi_N^2(v)\|^5_{\llf^1(\rr)})\\
\le C (t-r)^{5/21} (\|b_1\|^5_{\llf^1(\rr)}+\|b_2\|^5_{\llf^2(\rr)}\|\pi_N(v)\|^5_{\llf^2(\rr)}+K^5\|\pi_N(v)\|^5_{\llf^2(\rr)})\\
\le C (t-r)^{5/21} (\|b_1\|^5_{\llf^1(\rr)}+\|b_2\|^5_{\llf^2(\rr)}N^5+K^5N^5),
\end{split}
\end{equation*}
therefore $(\aaa_1 u)(t)$ and $(\aaa_2 u)(t)$ are continuous. For $\tilde{\vartheta}$, we refer to Lemma~\ref{lm:thcont}, convolution $p*u_0$ is continuous by standard properties of the heat semigroup.
\end{proof}

\section{Solution to the main equation}
\label{sc:burgsol}

We will use one more statement from~\cite{gyonua}.

\begin{lemma} (Lemma~4.2~\cite{gyonua})\label{lm:estuz}
Let $\zeta=\{\zeta(t,x),t\in[0,T],x\in\rr\}$ be a continuous and bounded function belonging to $\cc([0,T],\llf^2(\rr))$. Let $v\in\cc([0,T],\llf^2(\rr))$ be a solution of the integral equation
\begin{equation}\label{eq:bemfgv}
\begin{split}
v(t,x)=\int_{\rr} p(t,x-y)u_0(y)\,dy
+\int_0^t \int_{\rr}p(t-s,x-y) f(s,y,v(s,y)+\zeta(s,y))\,dy\,ds\\
-\int_0^t \int_{\rr}\frac{\pr p}{\pr y}(t-s,x-y) g(s,y,v(s,y)+\zeta(s,y))\,dy\,ds,
\end{split}
\end{equation}
where $u_0(y)\in\llf^2(\rr)$, and $f$ and $g$ satisfy the assumptions A\ref{assxfall} and A\ref{assxgall}.

Then we have
\begin{equation}\label{eq:estvn}
\|v(t)\|^2_{\llf^2(\rr)}\le (\|u_0\|^2_{\llf^2(\rr)}+C_1(1+R_1(\zeta)))\exp\{C_2(1+R_2(\zeta))\},
\end{equation}
where the constants $C_1$ and $C_2$ depend only on $T$ and on the functions $a_i,b_i$ and the constants $K$ and $L$ appearing in the hypothesis  A\ref{assxfall} and A\ref{assxgall}, and
\begin{equation*}
\begin{split}
R_1(\zeta)=\sup_{s\in [0,T]}(\|\zeta(s)\|^2_{\llf^2(\rr)}+\|\zeta(s)\|^4_{\llf^4(\rr)}+\|\zeta(s)\|^2_{\llf^{\infty}(\rr)}),\quad
R_2(\zeta)=\sup_{s\in [0,T]}\|\zeta(s)\|^2_{\llf^{\infty}(\rr)}.
_{}\end{split}
\end{equation*}
\end{lemma}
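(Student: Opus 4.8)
The plan is to prove \eqref{eq:estvn} by an energy a priori estimate for $\phi(t):=\|v(t)\|_{\llf^2(\rr)}^2$, rather than by iterating the Duhamel estimates \eqref{eq:estj1d}--\eqref{eq:estj2d}. Indeed, since $g$ may grow quadratically, a direct mild estimate only produces a quadratic integral inequality of the form $\phi(t)\le A+C\int_0^t(t-s)^{-3/4}\bigl(\phi(s)+\phi(s)^2\bigr)\,ds$, which yields a merely local bound with possible blow-up, whereas the global bound \eqref{eq:estvn} must exploit the cancellation specific to the Burgers-type term $g_2$. So first I would note that the $\cc([0,T],\llf^2(\rr))$-solution $v$ of \eqref{eq:bemfgv} is the mild solution of
\[
\frac{\pr v}{\pr t}=\frac{\pr^2 v}{\pr x^2}+f(t,x,v+\zeta)+\frac{\pr}{\pr x}g(t,x,v+\zeta),\qquad v(0,\cdot)=u_0 ,
\]
and that, after a standard approximation of $g$ by bounded Lipschitz functions obeying the same bounds as in A\ref{assxgall}, the approximate solutions belong to $\llf^2([0,T],H^1(\rr))$ with time derivative in $\llf^2([0,T],H^{-1}(\rr))$; hence $t\mapsto\|v(t)\|_{\llf^2(\rr)}^2$ is absolutely continuous and, after integration by parts in $x$,
\[
\frac{d}{dt}\|v(t)\|_{\llf^2(\rr)}^2=-2\|\pr_x v(t)\|_{\llf^2(\rr)}^2+2\langle v(t),f(t,\cdot,v+\zeta)\rangle-2\langle\pr_x v(t),g(t,\cdot,v+\zeta)\rangle ,
\]
$\langle\cdot,\cdot\rangle$ denoting the $\llf^2(\rr)$-inner product. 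Since the constants below depend only on $T,K,L,a_i,b_i$, they are unaffected by the truncation and \eqref{eq:estvn} passes to the limit by lower semicontinuity of the norm; I therefore argue with this identity, dropping the approximation index.

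For the $f$-term, A\ref{assxfall} together with the Cauchy--Schwarz and Young inequalities gives $2|\langle v(t),f(t,\cdot,v+\zeta)\rangle|\le C(1+\|\zeta(t)\|_{\llf^2(\rr)}^2)+C\|v(t)\|_{\llf^2(\rr)}^2$. For the term $-2\langle\pr_x v(t),g(t,\cdot,v+\zeta)\rangle$ I would split $g=g_1+g_2$. The linear growth of $g_1$ from A\ref{assxgall} gives $\|g_1(t,\cdot,v+\zeta)\|_{\llf^2(\rr)}\le\|b_1\|_{\llf^2(\rr)}+\|b_2\|_{\llf^\infty(\rr)}\|v(t)+\zeta(t)\|_{\llf^2(\rr)}$, so by Young's inequality, keeping a small weight free for later absorption,
\[
2|\langle\pr_x v(t),g_1(t,\cdot,v+\zeta)\rangle|\le\tfrac12\|\pr_x v(t)\|_{\llf^2(\rr)}^2+C\bigl(1+\|\zeta(t)\|_{\llf^2(\rr)}^2\bigr)+C\|v(t)\|_{\llf^2(\rr)}^2 .
\]

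The heart of the argument is the quadratic part $g_2$. Putting $G_2(s,r):=\int_0^r g_2(s,\rho)\,d\rho$ one has $|G_2(s,r)|\le\tfrac{K}{3}|r|^3$, and since the (approximating) solution $v(t,\cdot)$ vanishes at $\pm\infty$,
\[
\langle\pr_x v(t),g_2(t,\cdot,v(t))\rangle=\int_{\rr}\pr_x\bigl(G_2(t,v(t,x))\bigr)\,dx=0 .
\]
Thus only the increment survives, and using the estimate $|g_2(s,r+h)-g_2(s,r)|\le C\bigl(|h|+|r|\,|h|+|h|^2\bigr)$ (immediate when $g_2(s,r)=r^2/2$, and in general a consequence of the hypotheses on $g$) with $r=v(t,x)$, $h=\zeta(t,x)$ one obtains
\[
\|g_2(t,\cdot,v(t)+\zeta(t))-g_2(t,\cdot,v(t))\|_{\llf^2(\rr)}\le C\|\zeta(t)\|_{\llf^2(\rr)}+C\|\zeta(t)\|_{\llf^\infty(\rr)}\|v(t)\|_{\llf^2(\rr)}+C\|\zeta(t)\|_{\llf^4(\rr)}^2 ,
\]
and hence, again by Cauchy--Schwarz and Young,
\[
2|\langle\pr_x v(t),g_2(t,\cdot,v+\zeta)\rangle|\le\tfrac12\|\pr_x v(t)\|_{\llf^2(\rr)}^2+C\bigl(\|\zeta(t)\|_{\llf^2(\rr)}^2+\|\zeta(t)\|_{\llf^4(\rr)}^4\bigr)+C\|\zeta(t)\|_{\llf^\infty(\rr)}^2\|v(t)\|_{\llf^2(\rr)}^2 .
\]
This is exactly the step that produces the norms $\|\zeta\|_{\llf^4(\rr)}$ and $\|\zeta\|_{\llf^\infty(\rr)}$ appearing in $R_1(\zeta)$, with $\|\zeta\|_{\llf^\infty(\rr)}^2$ occurring as the coefficient of $\|v(t)\|_{\llf^2(\rr)}^2$ and therefore, after Gronwall, inside the exponential --- precisely the role of $R_2(\zeta)$ in \eqref{eq:estvn}. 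I expect this step, together with the careful bookkeeping of which norm of $\zeta$ multiplies $\|v(t)\|_{\llf^2(\rr)}^2$, to be the main obstacle; the rigorous justification of the energy identity for the merely $\cc([0,T],\llf^2(\rr))$-solution, via the approximation above, is the secondary technical point.

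Collecting the three estimates, the positive contributions $\tfrac12\|\pr_x v(t)\|_{\llf^2(\rr)}^2+\tfrac12\|\pr_x v(t)\|_{\llf^2(\rr)}^2$ do not exceed $\|\pr_x v(t)\|_{\llf^2(\rr)}^2$ and are absorbed by the dissipation term $-2\|\pr_x v(t)\|_{\llf^2(\rr)}^2$, leaving
\[
\frac{d}{dt}\|v(t)\|_{\llf^2(\rr)}^2\le C\bigl(1+R_1(\zeta)\bigr)+C\bigl(1+R_2(\zeta)\bigr)\|v(t)\|_{\llf^2(\rr)}^2 ,\qquad\|v(0)\|_{\llf^2(\rr)}^2=\|u_0\|_{\llf^2(\rr)}^2 ,
\]
since each of $\|\zeta(t)\|_{\llf^2(\rr)}^2$ and $\|\zeta(t)\|_{\llf^4(\rr)}^4$ is bounded by $R_1(\zeta)$ and $\|\zeta(t)\|_{\llf^\infty(\rr)}^2$ by $R_2(\zeta)$. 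Gronwall's lemma then gives
\[
\|v(t)\|_{\llf^2(\rr)}^2\le\bigl(\|u_0\|_{\llf^2(\rr)}^2+CT(1+R_1(\zeta))\bigr)\exp\{CT(1+R_2(\zeta))\} ,
\]
which is \eqref{eq:estvn} with $C_1=C_2=CT$; passing to the limit in the approximation, with the constants uniform, completes the proof.
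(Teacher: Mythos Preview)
The paper does not give its own proof of this lemma: it is simply quoted as Lemma~4.2 of \cite{gyonua} and used as a black box in the proof of Theorem~\ref{th:burgers}. So there is nothing in the present paper to compare your argument against.

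That said, your energy-estimate strategy is exactly the right one, and is the approach taken in the original reference: the Burgers cancellation $\langle\pr_x v,\,g_2(t,v)\rangle=0$ is precisely what turns the otherwise super-linear integral inequality into a linear Gronwall bound, and it is what forces the $\llf^4$ and $\llf^\infty$ norms of $\zeta$ to appear in $R_1$ and $R_2$. One small point to tighten: the hypotheses A\ref{assxgall} give the Lipschitz condition for the full $g$, not for $g_2$ alone, so the cleanest way to run your computation is to write
\[
g(s,y,v+\zeta)=g_1(s,y,v)+g_2(s,v)+\bigl[g(s,y,v+\zeta)-g(s,y,v)\bigr],
\]
use the cancellation on $g_2(s,v)$, the linear growth of $g_1$ on the first term, and the Lipschitz estimate $(b_3(y)+L|v+\zeta|+L|v|)|\zeta|$ on the bracket; this yields directly
\[
\|g(s,\cdot,v+\zeta)-g(s,\cdot,v)\|_{\llf^2(\rr)}\le C\|\zeta\|_{\llf^\infty(\rr)}+C\|\zeta\|_{\llf^\infty(\rr)}\|v\|_{\llf^2(\rr)}+C\|\zeta\|_{\llf^4(\rr)}^2,
\]
which is what you need. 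With this adjustment your proof is correct and reproduces the cited result.
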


The main result of the paper is the following.

\begin{thm}\label{th:burgers} Let Assumptions A\ref{assxu} -- A\ref{assxm} hold. Then equation~\eqref{eq:bemfg} has a unique solution which is continuous with values in $\llf^2(\rr)$.
\end{thm}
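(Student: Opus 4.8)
The plan is to combine the auxiliary equation with the a priori bound from Lemma~\ref{lm:estuz} to remove the truncation operator $\pi_N$. First I would fix $\omega\in\Omega_{\theta,\alpha}$, so that by Lemma~\ref{lm:thbound} and \eqref{eq:thnbound} the random function $\widetilde{\vartheta}$ is a bounded element of $\cc([0,T],\llf^2(\rr))$, and hence $C(\omega)=\max\{\sup_{t,x}|\widetilde{\vartheta}(t,x)|,\sup_t\|\widetilde{\vartheta}(t)\|_{\llf^2(\rr)}\}<\infty$. Then the preceding lemma guarantees, for every $N>0$, a unique solution $u_N\in\cc([0,T],\llf^2(\rr))$ of the truncated equation \eqref{eq:bemfpiv}.

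The key observation is that $v_N:=u_N-\widetilde{\vartheta}$ satisfies equation \eqref{eq:bemfgv} with $\zeta=\widetilde{\vartheta}$, except that the nonlinearities are evaluated at $\pi_N(v_N+\widetilde{\vartheta})$ rather than at $v_N+\widetilde{\vartheta}$. To handle this, I would first establish the a priori bound: as long as $\|u_N(s)\|_{\llf^2(\rr)}\le N$ for all $s\in[0,T]$, one has $\pi_N(u_N)=u_N$, so $v_N$ solves \eqref{eq:bemfgv} exactly and Lemma~\ref{lm:estuz} applies, giving
\[
\sup_{t\in[0,T]}\|v_N(t)\|^2_{\llf^2(\rr)}\le \bigl(\|u_0\|^2_{\llf^2(\rr)}+C_1(1+R_1(\widetilde{\vartheta}))\bigr)\exp\{C_2(1+R_2(\widetilde{\vartheta}))\}=:\Lambda(\omega),
\]
where $R_1(\widetilde{\vartheta})$ and $R_2(\widetilde{\vartheta})$ are finite for $\omega\in\Omega_{\theta,\alpha}$ (using boundedness in $\llf^\infty$ and in $\llf^2$, hence also in $\llf^4$ by interpolation). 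Consequently $\sup_t\|u_N(t)\|_{\llf^2(\rr)}\le \sqrt{\Lambda(\omega)}+C(\omega)=:M(\omega)$, a bound independent of $N$. A continuity/bootstrap argument (the set of $t$ where $\|u_N(t)\|\le N$ is relatively open and closed, contains $0$ once $N>\|u_0\|$) makes this rigorous. Now choosing any fixed $N>M(\omega)$, the truncation is inactive along the whole solution, so $u_N$ solves \eqref{eq:bemfg}; this gives existence.

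For uniqueness, suppose $u,\tilde u\in\cc([0,T],\llf^2(\rr))$ both solve \eqref{eq:bemfg} for the same $\omega$. Each is bounded in $\llf^2(\rr)$ on $[0,T]$ by continuity, say by some $N'(\omega)$; taking $N>N'(\omega)$, both solve \eqref{eq:bemfpiv}, which has a unique solution by the preceding lemma, so $u=\tilde u$. Finally, to pass from the pathwise statement (valid on $\Omega_{\theta,\alpha}$, which has probability one, for a fixed choice of $\theta>1$ and $\alpha\in(1/2,\beta(\sigma))$) to the assertion about \eqref{eq:bemfg}, I would note that $u$ inherits continuity in $\llf^2(\rr)$ from $\widetilde{\vartheta}$ (Lemma~\ref{lm:thcont}) and from the operators $J_1,J_2$ and the heat semigroup, and that replacing the stochastic integral term in \eqref{eq:bemfg} by its version $\widetilde{\vartheta}$ only modifies $u$ on a null set for each fixed $(t,x)$, which is the sense of equality in \eqref{eq:bemfg}.

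The main obstacle I anticipate is the verification that the a priori bound $M(\omega)$ is genuinely $N$-independent and that the bootstrap closing the truncation is valid: one must be careful that Lemma~\ref{lm:estuz} is applied only on the (closed) time interval where $\pi_N$ has not yet been triggered, and then argue that this interval is all of $[0,T]$. A secondary point requiring care is checking that $R_1(\widetilde{\vartheta})$ is finite --- in particular the $\llf^4(\rr)$ term --- which follows by interpolating $\|\widetilde{\vartheta}(s)\|_{\llf^4(\rr)}^4\le\|\widetilde{\vartheta}(s)\|_{\llf^\infty(\rr)}^2\|\widetilde{\vartheta}(s)\|_{\llf^2(\rr)}^2$ and invoking Lemmas~\ref{lm:thcont} and~\ref{lm:thbound}.
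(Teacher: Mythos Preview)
Your proposal is correct and follows essentially the same route as the paper: fix $\omega\in\Omega_{\theta,\alpha}$, solve the truncated equation~\eqref{eq:bemfpiv}, use Lemma~\ref{lm:estuz} with $\zeta=\widetilde{\vartheta}$ to obtain an $N$-independent a priori bound on $u_N-\widetilde{\vartheta}$, conclude that the truncation never activates for $N$ large enough, and derive uniqueness from uniqueness for~\eqref{eq:bemfpiv}. The paper simply fixes the value of $N$ up front (as $N_1+\sup_t\|\widetilde{\vartheta}(t)\|_{\llf^2(\rr)}+1$) and runs the same stopping-time/bootstrap argument via $t_N=\sup\{t:\|u(t)\|_{\llf^2(\rr)}\le N\}$; your additional remarks on the $\llf^4$ interpolation and on recovering~\eqref{eq:bemfg} from its version are not spelled out in the paper but are the natural way to fill in those points.
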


\begin{proof}
We consider equation~\eqref{eq:bemfg} for each fixed $\omega\in\Omega_{\theta,\alpha}$, take the version $\widetilde{\vartheta}$ that satisfies \eqref{eq:vervh}.

We apply Lemma~\ref{lm:estuz} to
\[
\zeta(t,x)=\int_{\rr} \int_0^t p(t-s,x-y) \sigma(s,y)\,ds\,d\mu(y)=\widetilde{\vartheta}(t,x),
\]
assumptions on $\zeta$ are fulfilled by Lemmas \ref{lm:thcont} and \ref{lm:thbound}.

For given $\zeta$, set
\begin{equation}\label{eq:defn}
N=N_1+\sup_{t\in [0,T]} \|\zeta(t)\|_{\llf^{2}(\rr)}+1,
\end{equation}
where $N_1^2$ equal to the right hand side of~\eqref{eq:estvn}.
For given $N$, take $u$ that is a solution of \eqref{eq:bemfpiv}.

Denote
\[
t_N=\sup \{t: \|u(t)\|_{\llf^{2}(\rr)}\le N\},
\]
for $t\le t_N$ we have $\pi_N(u)=u$ and $v=u-\zeta$ satisfies~\eqref{eq:bemfgv}.

By Lemma~\ref{lm:estuz}, $\|v(t)\|_{\llf^{2}(\rr)}\le N_1$, and
\[
\|u(t)\|_{\llf^{2}(\rr)}\le \|v(t)\|_{\llf^{2}(\rr)}+\|\zeta(t)\|_{\llf^{2}(\rr)}\le N-1,\quad t\le t_N.
\]
Therefore, $t_N=T$, and never we get $\|u(t)\|_{\llf^{2}(\rr)}> N$. The solution of \eqref{eq:bemfpiv} will satisfy \eqref{eq:bemfg}.

Conversely, in \eqref{eq:bemfg} we have that $\|u_0\|_{\llf^{2}(\rr)}\le N-1$, up to the moment $t_N=T$ equation~\eqref{eq:bemfg} coinsides with \eqref{eq:bemfpiv} and has a unique solution.
\end{proof}

\section{Averaging principle}
\label{sc:aver}

In this section, we consider the equation~\eqref{eq:burgg}, where functions $f$ and $g$ do not depend on the time variable $t$ and study the averaging of the stochastic term.

For $\varepsilon>0$, consider equation
\begin{equation}\label{eq:burgav}
\begin{split}
\frac{\pr u_\varepsilon}{\pr t}=\frac{\pr^2 u}{\pr x^2}+f(x,u_\varepsilon(t,x))+\frac{\pr g}{\pr x}(x,u_\varepsilon(t,x)) + \sigma(t/\varepsilon, x)\frac{\pr  \mu}{\pr x},\quad
u(0,x)=u_0(x).
\end{split}
\end{equation}

Assume that for each $y\in\rr$ there exists the following limit
\begin{equation}\label{eq:barf}
\bar{\sigma}(y)=\lim_{t\to\infty}\frac{1}{t}\int_0^t \sigma(s,y)\,ds.
\end{equation}
It is easy to see that $\bar{\sigma}(y)$ satisfies Assumption~A\ref{assxs} with the same constants $C_\sigma$, $L_\sigma$.

We will study convergence $u_{\varepsilon}(t, x)\to\bar{u}(t, x),\  \varepsilon\to 0$,
were $\bar{u}$ is the solution of the averaged equation
\begin{equation}
\label{eq:hshfav}
\begin{split}
\frac{\pr \bar{u}}{\pr t}=\frac{\pr^2 \bar{u}}{\pr x^2}+f(x,\bar{u}(t,x))+\frac{\pr g}{\pr x}(x,\bar{u}(t,x)) + \bar\sigma(x)\frac{\pr  \mu}{\pr x},\quad
\bar{u}(0,x)=u_0(x).
\end{split}
\end{equation}

The mild forms of \eqref{eq:burgav} and \eqref{eq:hshfav} are respectively
\begin{equation*}
\begin{split}
u_\varepsilon(t,x)=\int_0^t p(t,x-y)u_0(y)\,dy
+\int_0^t \int_{\rr}p(t-s,x-y) f(y,u_\varepsilon(s,y))\,dy\,ds\\ -
\int_0^t \int_{\rr}\frac{\pr }{\pr y}p(t-s,x-y) g(y,u_\varepsilon(s,y))\,dy\,ds
+\int_{\rr}d\mu(y) \int_0^t p(t-s,x-y) \sigma(s/\varepsilon,y)\,ds.
\end{split}
\end{equation*}
and
\begin{equation*}
\begin{split}
\bar{u}(t, x) = \int_{\rr}{p}(t, x-y)u_0(y)\,dy
+ \int_0^t ds \int_{\rr}{p}(t-s, x-y){f}(y, \bar{u}(s, y))\,dy\\
-\int_0^t \int_{\rr}\frac{\pr }{\pr y}p(t-s,x-y) g(y,\bar{u}(s,y))\,dy\,ds
+ \int_{\rr} d\mu(y) \int_0^t {p}(t-s, x-y)\bar{\sigma}(y)\,ds\, .
\end{split}
\end{equation*}

We also impose the following additional condition that is standard in the averaging principle.

\begin{assx}\label{assavg}
The function $G_{\sigma}(r,y)=\int_0^{r} (\sigma(s,y)-\bar{\sigma}(y))\,ds,\ r\in\rr_+,\ y\in\rr$
is bounded.
\end{assx}

This holds, for example, if $\sigma(s,y)$ is bounded and periodic in $s$ for each fixed $y$, and the set of values of minimal periods is bounded. Obviously, A\ref{assavg} implies~\eqref{eq:barf}.

\begin{thm}\label{th:averh} Assume that Assumptions A\ref{assxu}--A\ref{assavg} hold. Then there exists a versions of  $u_{\varepsilon}$ and $\bar{u}$ such that
\begin{equation}\label{eq:ueut}
\sup_{t\in [0,T]}\|u_{\varepsilon}(t)-\bar{u}(t)\|_{\llf^2(\rr)}\to 0\ a.~s.
\end{equation}
\end{thm}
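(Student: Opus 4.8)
The plan is to reduce the averaging statement to a quantitative estimate on the difference of the two stochastic convolutions, and then to run a Gronwall-type argument to transfer that smallness to the difference of the solutions. First I would introduce the two stochastic terms
\[
\vartheta_\varepsilon(t,x)=\int_{\rr}\int_0^t p(t-s,x-y)\sigma(s/\varepsilon,y)\,ds\,d\mu(y),\qquad
\bar\vartheta(t,x)=\int_{\rr}\int_0^t p(t-s,x-y)\bar\sigma(y)\,ds\,d\mu(y),
\]
and their versions $\widetilde\vartheta_\varepsilon$, $\widetilde{\bar\vartheta}$ satisfying \eqref{eq:vervh} (note $\sigma(\cdot/\varepsilon,\cdot)$ satisfies A\ref{assxs} with the same constants, so Lemmas~\ref{lm:thcont} and~\ref{lm:thbound} apply uniformly in $\varepsilon$). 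The central claim is that
\[
\sup_{t\in[0,T]}\|\widetilde\vartheta_\varepsilon(t)-\widetilde{\bar\vartheta}(t)\|_{\llf^2(\rr)}\to 0\quad\text{a.s. as }\varepsilon\to 0 .
\]
To prove this I would write the inner deterministic kernel as $q_\varepsilon(t,x,y)-\bar q(t,x,y)=\int_0^t p(t-s,x-y)(\sigma(s/\varepsilon,y)-\bar\sigma(y))\,ds$ and integrate by parts in $s$ using $G_\sigma$ from A\ref{assavg}: since $\sigma(s/\varepsilon,y)-\bar\sigma(y)=\varepsilon\,\frac{d}{ds}G_\sigma(s/\varepsilon,y)$, one gets
\[
q_\varepsilon-\bar q=\varepsilon\,p(t-s,x-y)G_\sigma(s/\varepsilon,y)\Big|_{s=0}^{s=t}
-\varepsilon\int_0^t \frac{\pr p}{\pr s}(t-s,x-y)\,G_\sigma(s/\varepsilon,y)\,ds ,
\]
and boundedness of $G_\sigma$ together with the heat-kernel bounds (and \eqref{eq:estpx}, \eqref{eq:estb}, \eqref{eq:estbss}) gives a pointwise bound of order $\varepsilon\,|\ln(\cdot)|$ times a Gaussian-in-$(x-y)$ factor, and likewise a Besov $B^\alpha_{22}$ bound with the same $\varepsilon$-rate for $\alpha$ slightly below $\beta(\sigma)$. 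The delicate point near $t-s\to 0$ is the mild logarithmic singularity of $\int_0^t (t-s)^{-1}\,ds$; I would handle it exactly as in \eqref{eq:esdpss}, splitting according to whether $\lambda(x-y)^2\ge t$ or not, so that the singular part is still integrable and carries the extra $\varepsilon$.

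Next I would feed these bounds into Lemma~\ref{lm:essih}: applying \eqref{eq:esqm} to $q_\varepsilon-\bar q$ on each interval $[j,j+1]$, exactly as in the decomposition \eqref{eq:sumtht}--\eqref{eq:estig} of Lemma~\ref{lm:thcont}, produces
\[
\|\widetilde\vartheta_\varepsilon(t)-\widetilde{\bar\vartheta}(t)\|_{\llf^2(\rr)}^2\le C(\omega)\,\varepsilon^{2}\,(\,\text{or }\varepsilon^{2}|\ln\varepsilon|^{2}\,)
\]
uniformly in $t\in[0,T]$, where $C(\omega)$ is the same random constant $g^2$-type sum built from $\mu((j,j+1])$ and $|\mu(\Delta^{(j)}_{kn})|$, finite a.s.\ on $\Omega_{\theta,\alpha}$ by Lemma~\ref{lm:fkmu} and A\ref{assxm}. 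This gives the central claim with an explicit rate. One should be slightly careful that the $\varepsilon$-dependent boundary term $\varepsilon\,p(t,x-y)G_\sigma(0,y)=0$ (since $G_\sigma(0,y)=0$) so only the $s=t$ boundary term $\varepsilon\,p(0,\cdot)$ is problematic — but that term is handled by noting $p(t-s,\cdot)$ is integrated against $d\mu$ after the $s$-integral, or alternatively by keeping the boundary term inside the time integral via a further regularization; this bookkeeping is the one genuinely fiddly spot.

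Finally, with the stochastic terms controlled, I would compare $u_\varepsilon$ and $\bar u$ by their mild forms. Choosing a common truncation level $N$ large enough (as in \eqref{eq:defn}, using Lemma~\ref{lm:estuz} with $\zeta=\widetilde\vartheta_\varepsilon$ resp.\ $\widetilde{\bar\vartheta}$, whose $R_1,R_2$ are bounded uniformly in $\varepsilon$ by Lemmas~\ref{lm:thcont}--\ref{lm:thbound} applied with constants independent of $\varepsilon$), both solutions solve the truncated equation \eqref{eq:bemfpiv} with their respective $\widetilde\vartheta$. Subtracting, and using the Lipschitz estimates \eqref{eq:estao} and \eqref{eq:estud} on $\aaa_1,\aaa_2$ together with the contraction bound for $\pi_N$ from \eqref{eq:difpin}, I get
\[
\|u_\varepsilon(t)-\bar u(t)\|_{\llf^2(\rr)}^2\le C_N\!\int_0^t (t-s)^{-3/4}\|u_\varepsilon(s)-\bar u(s)\|_{\llf^2(\rr)}^2\,ds
+C\,\sup_{s\in[0,T]}\|\widetilde\vartheta_\varepsilon(s)-\widetilde{\bar\vartheta}(s)\|_{\llf^2(\rr)}^2 .
\]
A singular (fractional) Gronwall inequality then yields $\sup_{t\in[0,T]}\|u_\varepsilon(t)-\bar u(t)\|_{\llf^2(\rr)}^2\le C_N(\omega)\sup_{s}\|\widetilde\vartheta_\varepsilon(s)-\widetilde{\bar\vartheta}(s)\|_{\llf^2(\rr)}^2\to 0$ a.s., which is \eqref{eq:ueut}. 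The main obstacle is the first part: getting the $\varepsilon$-rate out of $q_\varepsilon-\bar q$ in the Besov norm uniformly in the interval index $j$ while keeping the random multiplier summable against $\mu$ — i.e.\ repeating the entire estimate chain \eqref{eq:esio}--\eqref{eq:estqb} with the extra integration by parts against $G_\sigma$ and making sure the logarithmic time-singularity does not destroy the gain; the Gronwall step at the end is routine.
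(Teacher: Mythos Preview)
Your proposal follows essentially the same architecture as the paper's proof: control the difference of the stochastic convolutions $\xi_\varepsilon=\widetilde\vartheta_\varepsilon-\widetilde{\bar\vartheta}$, show the solutions $u_\varepsilon,\bar u$ satisfy a uniform-in-$\varepsilon$ a priori bound $\|u_\varepsilon(t)\|_{\llf^2(\rr)}\le N$ via Lemma~\ref{lm:estuz} (so the truncation is inactive), and then close with the Lipschitz estimates \eqref{eq:estao}, \eqref{eq:estud} and a fractional Gronwall inequality. The only notable difference is in the first step: the paper does not carry out the integration-by-parts against $G_\sigma$ itself but instead invokes an external result (Step~1 of Theorem~1 in \cite{radavsh19}, or Theorem~7.1 of \cite{radbook}) for the pointwise bound $|\xi_\varepsilon(t,x)|\le C(\omega)\varepsilon^{\gamma_1}$, combines it with the $\varepsilon$-independent dominant $|\xi_\varepsilon(t,x)|\le Cg(x)$ from \eqref{eq:defg}, and passes to $\llf^2$ by dominated convergence; your direct Besov-norm estimate with explicit $\varepsilon$-rate is a legitimate alternative (and the boundary issue you flag at $s=t$ is exactly the ``fiddly'' point that forces the rate down to some $\varepsilon^{\gamma_1}$ rather than $\varepsilon$).
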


\begin{proof}
In Step 1 of the proof of  Theorem~1~\cite{radavsh19} (or Theorem 7.1~\cite{radbook}), for the stochastic integrals in
\begin{eqnarray*}
\xi_{\varepsilon} (t,x)= \int_{\rr} d\mu(y) \int_0^t {p}(t-s, x-y)\sigma(s/\varepsilon, y)ds
- \int_{\rr} d\mu(y) \int_0^t {p}(t-s, x-y)\bar{\sigma}(y)ds
\end{eqnarray*}
and some $\gamma_1>0$ it was proved that there exists a version such that
\begin{equation*}
|{\xi_\varepsilon}(t,x)|\le C(\omega){\varepsilon}^{\gamma_1}\ \textrm{a.~s.}
\end{equation*}
for all $\omega\in\Omega_1$, $\pp(\Omega_1)=1$.

As in the proof of Lemma~\ref{lm:thcont}, for function $g$ defined in~\eqref{eq:defg}, we obtain that
\begin{equation*}
|{\xi_\varepsilon}(t,x)|\le Cg(x)\ \textrm{a.~s.},
\end{equation*}
where $g\in \llf^2(\rr)$ is independent of $\varepsilon$.

The dominated convergence theorem imply that for each $t\in [0,T]$ and each $\omega\in\Omega_{\theta,\alpha}\cap \Omega_1$ holds
\begin{equation}\label{eq:esxigl}
\|{\xi_\varepsilon}(t)\|_{\llf^2(\rr)}\to 0,\quad \varepsilon\to 0.
\end{equation}

In the proof of Theorem~\ref{th:burgers}, it was obtained that
\begin{equation}\label{eq:estut}
\|u(t)\|_{\llf^2(\rr)}\le N,
\end{equation}
where $N$ is defined in~\eqref{eq:defn}.

We can see that also
\begin{equation}\label{eq:estute}
\|u_{\varepsilon}(t)\|_{\llf^2(\rr)}\le N
\end{equation}
for all $\varepsilon>0$ for the same $N$. To explain this, note that in~\eqref{eq:defn} $N_1$ depends only on $T$, on the functions $a_i,b_i$ and the constants $K$ and $L$ appearing in the assumptions A\ref{assxfall} and A\ref{assxgall}. For $\zeta(t,s)=\vartheta(t,s)$, in the proof of Lemma~\ref{lm:thcont} we obtained that
\[
\sup_{t\in[0,T]} \|\vartheta(t)\|^2_{\llf^{2}(\rr)}\le C\int_{\rr} g^2(x)\,dx,
\]
where the right-hand side may be estimated by \eqref{eq:estig}. The constants in \eqref{eq:estig} may depend on $C_{\sigma}$ and $L_{\sigma}$ from assumption A\ref{assxs}, but are independent of $\varepsilon$.

Further, we obtain
\begin{equation*}
\begin{split}
\|u_{\varepsilon}(t)- \bar{u}(t)\|_{\llf^2(\rr)}
\le \Bigl\|\int_0^t ds \int_{\rr}{p}(t-s, x-y)(f(y,u_{\varepsilon}(s, y))-f(y, \bar{u}(s, y))\,dy\Bigr\|_{\llf^2(\rr)}\\
+\Bigl\|\int_0^t ds \int_{\rr}\dfrac{\pr }{\pr y}{p}(t-s, x-y)(g(y,u_{\varepsilon}(s, y))-g(y, \bar{u}(s, y))\,dy\Bigr\|_{\llf^2(\rr)}\\
+ \|{\xi_\varepsilon}(t)\|_{\llf^2(\rr)}:=J_1+J_2+\|{\xi_\varepsilon}(t)\|_{\llf^2(\rr)}.
\end{split}
\end{equation*}
For $J_1$, as in \eqref{eq:aauaav}--\eqref{eq:estao}, taking into account~\eqref{eq:estut} and~\eqref{eq:estute}, we get
\begin{equation*}
J_1^2\le C \int_0^t  (t-s)^{-1/4} \|u_{\varepsilon}(s)- \bar{u}(s)\|_{\llf^2(\rr)}^2\,ds.
\end{equation*}

For $J_2$, as in \eqref{eq:estud}, we obtain
\begin{equation*}
J_2^2\le C \int_0^t  (t-s)^{-3/4} \|u_{\varepsilon}(s)- \bar{u}(s)\|_{\llf^2(\rr)}^2\,ds.
\end{equation*}

Therefore,
\begin{equation*}
\|u_{\varepsilon}(t)- \bar{u}(t)\|_{\llf^2(\rr)}^2
\le C \int_0^t  (t-s)^{-3/4} \|u_{\varepsilon}(s)- \bar{u}(s)\|_{\llf^2(\rr)}^2\,ds+3\|{\xi_\varepsilon}(t)\|_{\llf^2(\rr)}^2.
\end{equation*}

We use the generalized Gronwall's inequality (see, for example, Corollary~1 in~\cite{ye07gron})
and get
\begin{equation*}
\begin{split}
\|u_{\varepsilon}(t)- \bar{u}(t)\|_{\llf^2(\rr)}^2\le C(\omega)\|{\xi_\varepsilon}(t)\|_{\llf^2(\rr)}^2
+C(\omega)\int_0^t\sum_{n=1}^\infty \dfrac{\Gamma^n(1/4)}{\Gamma(n/4)}(t-s)^{n/4-1}\|{\xi_\varepsilon}(s)\|_{\llf^2(\rr)}^2\,ds.
\end{split}
\end{equation*}

By \eqref{eq:thnbound}, $\|{\xi_\varepsilon}(s)\|_{\llf^2(\rr)}\le C$. It is easy to check that the function
\[
h(s)=\sum_{n=1}^\infty \dfrac{\Gamma^n(1/4)}{\Gamma(n/4)}(t-s)^{n/4-1}
\]
is integrable. Applying \eqref{eq:esxigl} and the dominated convergence theorem, we obtain \eqref{eq:ueut}.

\end{proof}

\bibliographystyle{bib/vmsta-mathphys}
\bibliography{RadchenkoBurgersEq22}

\end{document}